\newcommand{\Ch}{{\mathrm{Ch}}}
\DeclareMathOperator{\charac}{char}
\newcommand{\cl}{{\mathrm{cl}}}
\DeclareMathOperator{\Gal}{Gal}
\DeclareMathOperator{\Gr}{Gr}
\newcommand{\id}{\ensuremath{\mathrm{id}}\xspace}
\DeclareMathOperator{\im}{im}
\DeclareMathOperator{\rank}{rank}
\DeclareMathOperator{\Spec}{Spec}
\newcommand{\U}{\mathrm{U}}
\newcommand{\ov}{\overline}
\newcommand{\lra}{\longrightarrow}
\newenvironment{altenumerate}
   {\begin{list}
      {(\theenumi) }
      {\usecounter{enumi}
       \setlength{\labelwidth}{0pt}
       \setlength{\labelsep}{0pt}
       \setlength{\leftmargin}{0pt}
       \setlength{\itemsep}{\the\smallskipamount}
       \renewcommand{\theenumi}{\roman{enumi}}
      }}
   {\end{list}}
\newenvironment{altitemize}
   {\begin{list}
      {$\bullet$}
      {\setlength{\labelwidth}{0pt}
	   \setlength{\itemindent}{5pt}
       \setlength{\labelsep}{5pt}
       \setlength{\leftmargin}{0pt}
       \setlength{\itemsep}{\the\smallskipamount}
      }}
   {\end{list}}
\renewcommand{\to}{%
   \ifbool{@display}{\longrightarrow}{\rightarrow}%
   }
\let\shortmapsto\mapsto
\renewcommand{\mapsto}{%
   \ifbool{@display}{\longmapsto}{\shortmapsto}%
   }
\newlength{\olen}
\newlength{\ulen}
\newlength{\xlen}
\newcommand{\xra}[2][]{%
   \ifbool{@display}%
      {\settowidth{\olen}{$\overset{#2}{\longrightarrow}$}%
       \settowidth{\ulen}{$\underset{#1}{\longrightarrow}$}%
       \settowidth{\xlen}{$\xrightarrow[#1]{#2}$}%
       \ifdimgreater{\olen}{\xlen}%
          {\underset{#1}{\overset{#2}{\longrightarrow}}}%
          {\ifdimgreater{\ulen}{\xlen}%
             {\underset{#1}{\overset{#2}{\longrightarrow}}}
             {\xrightarrow[#1]{#2}}}}%
      {\xrightarrow[#1]{#2}}
   }
\newcommand{\xyra}[2][]{%
   \settowidth{\xlen}{$\xrightarrow[#1]{#2}$}%
   \ifbool{@display}%
      {\settowidth{\olen}{$\overset{#2}{\longrightarrow}$}%
       \settowidth{\ulen}{$\underset{#1}{\longrightarrow}$}%
       \ifdimgreater{\olen}{\xlen}%
          {\mathrel{\xymatrix@M=.12ex@C=3.2ex{\ar[r]^-{#2}_-{#1} &}}}%
          {\ifdimgreater{\ulen}{\xlen}%
             {\mathrel{\xymatrix@M=.12ex@C=3.2ex{\ar[r]^-{#2}_-{#1} &}}}
             {\mathrel{\xymatrix@M=.12ex@C=\the\xlen{\ar[r]^-{#2}_-{#1} &}}}}}%
      {\mathrel{\xymatrix@M=.12ex@C=\the\xlen{\ar[r]^-{#2}_-{#1} &}}}%
   }
\newcommand{\xla}[2][]{%
   \ifbool{@display}%
      {\settowidth{\olen}{$\overset{#2}{\longleftarrow}$}%
       \settowidth{\ulen}{$\underset{#1}{\longleftarrow}$}%
       \settowidth{\xlen}{$\xleftarrow[#1]{#2}$}%
       \ifdimgreater{\olen}{\xlen}%
          {\underset{#1}{\overset{#2}{\longleftarrow}}}%
          {\ifdimgreater{\ulen}{\xlen}%
             {\underset{#1}{\overset{#2}{\longleftarrow}}}
             {\xleftarrow[#1]{#2}}}}%
      {\xleftarrow[#1]{#2}}
   }
\newcommand{\isoarrow}{%
   \ifbool{@display}{\overset{\sim}{\longrightarrow}}{\xrightarrow\sim}%
   }
\renewcommand{\lra}{%
   \ifbool{@display}{\longleftrightarrow}{\leftrightarrow}%
   }
\newcommand{\HOM}{\mathcal{H}om}
\newcommand{\Tate}{\mathrm{Tate}}
\DeclareFontFamily{U}{matha}{\hyphenchar\font45}
\DeclareFontShape{U}{matha}{m}{n}{
      <5> <6> <7> <8> <9> <10> gen * matha
      <10.95> matha10 <12> <14.4> <17.28> <20.74> <24.88> matha12
      }{}
\DeclareSymbolFont{matha}{U}{matha}{m}{n}
\DeclareFontFamily{U}{mathx}{\hyphenchar\font45}
\DeclareFontShape{U}{mathx}{m}{n}{
      <5> <6> <7> <8> <9> <10>
      <10.95> <12> <14.4> <17.28> <20.74> <24.88>
      mathx10
      }{}
\DeclareSymbolFont{mathx}{U}{mathx}{m}{n}
\DeclareMathSymbol{\obot}         {2}{matha}{"6B}
\newtheorem{theorem}{Theorem}[section]
\newtheorem{proposition}[theorem]{Proposition}
\newtheorem{lemma}[theorem]{Lemma}
\newtheorem{corollary}[theorem]{Corollary}
\theoremstyle{definition}
\newtheorem{definition}[theorem]{Definition}
\newtheorem{example}[theorem]{Example}
\newtheorem{remark}[theorem]{Remark}
\numberwithin{equation}{theorem}
\newcommand{\kb}{{\bar k}}
\DeclareMathOperator{\HH}{H}
\DeclareMathOperator{\codim}{codim}
\DeclareMathOperator{\DL}{DL}
\newcommand{\XV}{\DL(V)}
\newcommand{\XVb}{\DL(V)_{\kb}}
\newcommand{\ZW}{Z(W)}
\newcommand{\ZWp}{Z(W')}
\newcommand{\XW}{\DL(W/W^\perp)}
\newcommand{\oU}{\ov U}
\renewcommand{\k}{k_0}
\newcommand{\GrV}{\Gr_{d+1}(V)}
\newcommand{\Grd}{\Gr_{d}(V)}
\newcommand{\Grdd}{\Gr_{d,d+1}(V)}
\newcommand{\SGr}{\mathcal{S}_{\GrV}}
\newcommand{\QGr}{\mathcal{Q}_{\GrV}}
\newcommand{\PV}{\mathbb{P}(\wedge^{d+1} V)}
\DeclareMathOperator{\pl}{Pl}
\newcommand{\UX}{\mathcal{U}_{\XV}}
\newcommand{\UW}{\mathcal{U}_{\ZW}}
\newcommand{\LX}{\mathcal{L}_{\XV}}
\newcommand{\LW}{\mathcal{L}_{\ZW}}
\newcommand{\LXW}{\mathcal{L}_{\XW}}
\newcommand{\VX}{\mathcal{V}_{\XV}}
\newcommand{\VW}{\mathcal{W}_{\ZW}}
\newcommand{\QX}{\mathcal{Q}_{\XV}}
\newcommand{\QW}{\mathcal{Q}_{\ZW}}
\newcommand{\Ell}{l}
\title{Degrees of unitary Deligne--Lusztig varieties}
\author[Chao Li]{Chao Li}
\address{Columbia University, Department of Mathematics, 2990 Broadway,	New York, NY 10027, USA}
\email{chaoli@math.columbia.edu} 
\date{\today}
\begin{document}

\maketitle{}

\begin{abstract}
  We prove an explicit degree formula for certain unitary Deligne--Lusztig varieties. Combining with an alternative degree formula in terms of Schubert calculus, we deduce several algebraic combinatorial identities which may be of independent interest.
\end{abstract}

\section{Introduction}

\subsection{Degrees of unitary Deligne--Lusztig varieties}
Let $\k=\mathbb{F}_q$ be a finite field of size $q$. Let $k=\mathbb{F}_{q^2}$ be the quadratic extension of $\k$.  Let $V$ be a (nondegenerate) $k/\k$-hermitian space of dimension $n$. Let $\Gr_{m}(V)$ be the Grassmannian of $m$-dimensional subspaces of $V$, which is a smooth projective variety over $k$ of dimension $m(n-m)$.

 We assume that $V$ has dimension $n=2d+1$ ($d\ge0$) over $k$ and take $m=d+1$. Define $\XV\subseteq \Gr_{d+1}(V)$ to be the closed $k$-subscheme parametrizing $(d+1)$-dimensional subspaces $U$ such that $U^\perp\subseteq U$, where $U^\perp$ is the orthogonal complement of $U$ in $V$ (see the more precise Definition \ref{def:DLV}). It is smooth, projective and geometrically irreducible of dimension $d$, and is known as a \emph{unitary Deligne--Lusztig variety}, as it can be identified as a generalized Deligne--Lusztig variety associated to a parabolic subgroup of the odd unitary group $\U(V)$ (\cite[\S 4.5]{Vollaard2011}).
 This class of varieties shows up in the study of the supersingular locus of unitary Shimura varieties (\cite{Vollaard2011}) and plays an important role in the arithmetic of unitary Shimura varieties, such as the Kudla--Rapoport conjecture (\cite{Kudla2011,Kudla2014}).
 
Our first main result is a simple degree formula for $\XV$. Recall that the \emph{degree} of  a projective variety $X\subseteq \mathbb{P}^N$ of pure dimension $d$  is given by the geometric intersection number of $X$ with $d$ general hyperplanes in $\mathbb{P}^N$. Denote by $\deg \XV$ the degree of $\XV$ under the Pl\"ucker embedding $\Gr_{d+1}(V)\hookrightarrow \mathbb{P}(\wedge^{d+1}V)$. 

\begin{theorem}\label{thm:mainA} Let $V$ be a $k/\k$-hermitian space of dimension $n=2d+1$. Then
  \begin{equation}
  \deg \XV=\prod_{i=1}^d{\frac{1-q^{2i}}{1-q}}.\label{eq:main}
\end{equation}
\end{theorem}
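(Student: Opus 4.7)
My approach is to realize $\XV$ as the vanishing locus of a natural section of a vector bundle on $\Gr := \Gr_{d+1}(V)$, express its cycle class $[\XV]$ as a top Chern class, and reduce $\deg\XV$ to an intersection computation on the Grassmannian.

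Let $\CS \subset V \otimes \CO_\Gr$ and $\CQ = (V\otimes \CO_\Gr)/\CS$ be the tautological sub- and quotient bundles. The hermitian form $h$ on $V$ induces an $\CO_\Gr$-linear map $V \otimes \CO_\Gr \to \CS^{\vee,(q)}$ (Frobenius twist of the dual), whose kernel $\CS^\perp$ is a rank-$d$ subbundle. Dualizing the short exact sequence $0 \to \CS^\perp \to V \otimes \CO_\Gr \to \CS^{\vee,(q)} \to 0$ and using the form-induced isomorphism $V \cong V^{\vee,(q)}$ identifies $(\CS^\perp)^\vee \cong \CQ^{(q)}$. The defining condition $U^\perp \subseteq U$ is precisely the vanishing of the morphism $\varphi\colon \CS^\perp \to \CQ$ induced by $\CS^\perp \hookrightarrow V \otimes \CO_\Gr \twoheadrightarrow \CQ$, a global section of $\CE := (\CS^\perp)^\vee \otimes \CQ \cong \CQ^{(q)} \otimes \CQ$, a bundle of rank $d^2$. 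Since $\codim_\Gr \XV = d^2 = \rank \CE$, one has $[\XV] = c_{d^2}(\CE)$; by the splitting principle, writing the Chern roots of $\CQ$ as $\alpha_1,\dots,\alpha_d$,
$$[\XV] = \prod_{i,j=1}^d (q\alpha_i + \alpha_j) \in A^{d^2}(\Gr).$$

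The Plücker line bundle is $\det\CQ$, so its hyperplane class is $H = \alpha_1 + \cdots + \alpha_d$, and
$$\deg \XV = \int_\Gr \prod_{i,j=1}^d(q\alpha_i + \alpha_j) \cdot (\alpha_1 + \cdots + \alpha_d)^d.$$
The Grassmannian pushforward selects the coefficient of the top Schur polynomial $s_{(d+1)^d}(\alpha) = (\alpha_1\cdots\alpha_d)^{d+1}$ in the Schur expansion of the integrand; equivalently, one reduces the integrand modulo the Grassmann relations $h_i(\alpha_1,\dots,\alpha_d) = 0$ for $i > d+1$ and reads off the coefficient of the top monomial. A direct check confirms the formula in low dimension: $d=1$ gives $q+1$, and $d=2$ gives $(q+1)^2(q^2+1)$.

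The main obstacle is evaluating this integral in closed form for all $d$. One natural route is to factor out the diagonal,
$$\prod_{i,j=1}^d(q\alpha_i + \alpha_j) = (q+1)^d \alpha_1\cdots\alpha_d \cdot \prod_{1 \leq i<j \leq d}\bigl[(q^2+1)\alpha_i\alpha_j + q(\alpha_i^2 + \alpha_j^2)\bigr],$$
and then apply the Jacobi bi-alternant formula or a residue calculation to extract the top-Schur coefficient. An alternative route is suggested by the numerical coincidence $\prod_{i=1}^d (1-q^{2i})/(1-q) = |\Sp_{2d}(\k)/B|$, the point-count of the symplectic flag variety: a geometric correspondence between $\XV$ (under its Plücker embedding) and a $\k$-form of $\Sp_{2d}/B$ would yield the formula from the classical $q$-factorization of the symplectic flag count, bypassing the symmetric-function calculation entirely.
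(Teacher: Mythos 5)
Your reduction is sound as far as it goes: $\XV$ is indeed the zero scheme of the canonical section of the rank-$d^2$ bundle $\sigma^*\QGr\otimes\QGr$ on $\GrV$ (via the identification of the orthogonal complement of $\SGr$ with $(\sigma^*\QGr)^*$, as in Lemma \ref{lem:bunGr}), the zero scheme has the expected codimension $d^2$ and coincides with the smooth variety $\XV$, so $[\XV]=c_{d^2}(\sigma^*\QGr\otimes\QGr)=\prod_{i,j=1}^d(q\alpha_i+\alpha_j)$ with $\alpha_1,\ldots,\alpha_d$ the Chern roots of $\QGr$, and the Pl\"ucker hyperplane class is $\sigma_1=c_1(\QGr)=\alpha_1+\cdots+\alpha_d$. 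But this only converts Theorem \ref{thm:mainA} into the assertion that $\prod_{i,j=1}^d(q\alpha_i+\alpha_j)\cdot\sigma_1^d=\prod_{i=1}^d\frac{1-q^{2i}}{1-q}$ in $\Ch^{d(d+1)}(\GrV_\kb)_\mathbb{Q}\simeq\mathbb{Q}$, and you stop exactly there: the ``main obstacle'' you name is the entire content of the theorem, and neither of your two proposed routes is carried out. The first route (bialternant/residue extraction of the top Schur coefficient) is precisely the combinatorial identity recorded in Corollary \ref{cor:main} (\ref{item:m0})--(\ref{item:m2}); in the paper that identity is not proved directly but is \emph{deduced} by combining Theorem \ref{thm:mainA} with Theorem \ref{thm:mainB}, and the paper explicitly describes it as nontrivial even for small $d$ and poses a direct combinatorial proof as an open problem. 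So, relative to the paper, your plan is circular at the decisive step: what you have actually re-derived is a Chern-class form of Theorem \ref{thm:mainB}, not Theorem \ref{thm:mainA}. The second route---upgrading the numerical coincidence $\prod_{i=1}^d\frac{1-q^{2i}}{1-q}=\sum_{w\in W(C_d)}q^{\ell(w)}$ to a geometric correspondence with a form of $\Sp_{2d}/B$---is speculative: projective degree is not a point count, so even granting such a correspondence you would still need it to match the Pl\"ucker polarization, and nothing of the sort is constructed. The checks for $d=1,2$ and your (correct) factorization of $\prod_{i,j}(q\alpha_i+\alpha_j)$ do not close this gap.

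For comparison, the paper's proof never needs the closed-form symmetric-function evaluation. It works on $\XV$ itself: the special line bundle $\LX=\UX/\UX^\perp$ satisfies $c_1(\LX)=(1-q)\,i^*c_1(\SGr)$ (Lemma \ref{lem:bunGr}), so $\deg\XV=c_1(\LX^*)^d/(1-q)^d$; the divisor class $c_1(\LX^*)$ is identified with $\frac{1-q^2}{1+q^{2d+1}}\sum_{\codim W=1}[\ZW]$ by testing against $1$-dimensional special cycles, using the Tate conjecture for $\XV$ and the perfect pairing on Tate classes (Proposition \ref{prop:c1}); and then $c_1(\LX^*)^d=\prod_{i=1}^d(1-q^{2i})$ follows by induction on $d$ through the identification $\ZW\simeq\XW$ and counts of isotropic lines (Proposition \ref{prop:c1d}). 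To complete your approach you would have to actually prove the coefficient identity of Corollary \ref{cor:main} (\ref{item:m2}) by symmetric-function or representation-theoretic means; that would be a genuinely new argument (and of independent interest), not a routine finishing step.
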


\begin{remark}
Here (and below) we regard the empty product (when $d=0$) as 1. The right hand side of \eqref{eq:main} can also be interpreted as the $q$-analogue of the double factorial $$[2d]_q!!:=[2d]_q [2d-2]_q\cdots [2]_q,$$ where $[n]_q=\frac{1-q^n}{1-q}$  is the $q$-analogue of $n$.
\end{remark}

Theorem \ref{thm:mainA} will be proved in \S\ref{sec:proof-theor-refthm:m}.  Its proof is inspired by the \emph{higher local modularity} in the recent proof of the Kudla--Rapoport conjecture (cf. \cite[\S 6.4]{LZ}). In fact, the key formulas (Propositions \ref{prop:c1}, \ref{prop:c1d}) can be extracted from certain vertical intersection formulas (\cite[Lemmas~6.4.5, 6.4.6]{LZ}) on unitary Rapoport--Zink spaces. To make the ideas more transparent, here we work directly on $\XV$ and introduce the notions of \emph{special cycles} $\ZW\subseteq\XV$  (Definition \ref{def:specialcycle}) and a \emph{special line bundle} $\LX$ (Definition \ref{def:speciallinebundle}) on $\XV$. These notions may be viewed respectively as finite field analogues of special cycles and tautological line bundles on unitary Shimura varieties. From this perspective, the degree formula in Theorem \ref{thm:mainA}, or more precisely the formula for $c_1(\LX^*)^d$ in Proposition \ref{prop:c1d}, may be viewed as a finite field analogue of the constant term formula in Kudla's geometric Siegel--Weil formula (relating the geometric volume of unitary Shimura varieties and an abelian $L$-value, cf. \cite[(4.4)]{Kudla2004} for the analogue for orthogonal Shimura varieties).

  The proof of Theorem \ref{thm:mainA} ultimately relies on identifying the special cycles $\ZW\subseteq \XV$ as unitary Deligne--Lusztig subvarieties and the (proved) Tate conjecture for $\XV$, in order to perform induction on the dimension $d$. These inductive structures are available for Deligne--Lusztig varieties beyond those of unitary types (e.g., the type $^2D_n$ considered in \cite[\S7.6]{LZ22} and two other types $B_n, C_n$ listed in \cite[\S3]{He2019}), and it would be interesting to extend the method to obtain degree formulas for more general Deligne--Lusztig varieties.

  \subsection{Schubert calculus and applications to algebraic combinatorics} We also prove a different formula for $\deg\XV$ in terms of Schubert calculus on Grassmannians (see also the related general works \cite{Kim20, HP20}).

  \begin{theorem}\label{thm:mainB}
    The following identity holds in $\Ch^{d(d+1)}(\GrV_\kb)_\mathbb{Q}\simeq \mathbb{Q}$:
    \begin{equation}
      \label{eq:schubertmain}
      \deg\XV=\sum_c \sigma_c\sigma_{\widehat c'}\sigma_1^d q^{|c|},
    \end{equation}
    where the sum runs over all integer tuples $c=(c_1,\ldots,c_d)$ with $d\ge c_1\ge \cdots \ge c_d\ge0$. Here (as recalled in \S\ref{sec:remind-schub-calc}):
    \begin{altitemize}
    \item     $|c|=c_1+\cdots +c_d$,
    \item $\sigma_c\in\Ch^{|c|}(\GrV_\kb)_\mathbb{Q}$ is the \emph{Schubert class},
    \item $\widehat c$ is the \emph{complement} of $c$ defined by $\hat c:=(d-c_d,\ldots, d-c_1)$,
    \item $\widehat c'$ is the \emph{conjugate} of $\widehat c$.
    \end{altitemize}

  \end{theorem}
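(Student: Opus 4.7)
The plan is to realize $\XV$ as the scheme-theoretic zero locus of a section of an explicit rank-$d^2$ vector bundle on $\GrV$, identify that bundle via the sesquilinearity of $h$, and expand its top Chern class using a classical Cauchy-type identity.

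Let $\CS \subseteq V \otimes \sO_{\GrV}$ denote the tautological rank-$(d+1)$ subbundle, $\CQ = (V \otimes \sO_{\GrV})/\CS$ the rank-$d$ quotient, and $\CS^\perp \subseteq V \otimes \sO_{\GrV}$ the rank-$d$ subbundle whose fiber at $U$ is $U^\perp$. The condition $U^\perp \subseteq U$ is equivalent to the vanishing of the composition $\CS^\perp \hookrightarrow V \otimes \sO_{\GrV} \twoheadrightarrow \CQ$, so $\XV$ is cut out by a section of $(\CS^\perp)^* \otimes \CQ$. Since this bundle has rank $d^2$, matching the codimension of $\XV$ in $\GrV$, the section is regular and
\[
  [\XV] = c_{d^2}\bigl((\CS^\perp)^* \otimes \CQ\bigr) \in \Ch^{d^2}(\GrV_{\kb}).
\]

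The key identification is $\CS^\perp \simeq (\CQ^{(q)})^*$ on $\GrV_{\kb}$, where $(q)$ denotes the $q$-Frobenius twist. Indeed, writing $h(v,w) = \tilde h(v, w^{(q)})$ for a bilinear form $\tilde h$ on $V_{\kb}$, the equality $U^\perp = (U^{(q)})^{\perp_{\tilde h}}$ together with $\tilde h$-duality identifies $\CS^\perp$ with $(\CQ^{(q)})^*$ as abstract vector bundles. In particular, $c_i((\CS^\perp)^*) = q^i c_i(\CQ)$, since Frobenius pullback multiplies the $i$-th Chern class by $q^i$. Applying the classical Cauchy-type identity
\[
  \prod_{i,j=1}^{d} (x_i + y_j) = \sum_{\lambda \subseteq d\times d} s_\lambda(x)\, s_{(\lambda^c)'}(y)
\]
with $x_i$ the formal Chern roots $qy_i$ of $\CQ^{(q)}$ and $y_j$ those of $\CQ$, together with the identification $s_\lambda(\CQ) = \sigma_\lambda$ for $\lambda \subseteq d \times d$, yields $[\XV] = \sum_c q^{|c|}\, \sigma_c\, \sigma_{\widehat c'}$ in $\Ch^{d^2}(\GrV_{\kb})$. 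Multiplying by $\sigma_1^d = c_1(\CQ)^d$ (the $d$-th power of the Pl\"ucker hyperplane class) and taking the degree in $\Ch^{d(d+1)}(\GrV_{\kb}) \simeq \BQ$ then gives \eqref{eq:schubertmain}.

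The main obstacle I anticipate is giving a precise, Frobenius-correct derivation of the identification $\CS^\perp \simeq (\CQ^{(q)})^*$, carefully tracking the semilinearity of $h$ and the absolute $q$-Frobenius on $\GrV_{\kb}$; the dual-Cauchy expansion and the regularity of the defining section (which follows from the codimension match) are routine once this identification is in hand.
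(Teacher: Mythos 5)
Your proposal is correct in outline, but it takes a genuinely different route from the paper. The paper computes $[\XV]$ (Corollary \ref{cor:fundclass}) by viewing $\XV$ as the intersection of the graph of $U\mapsto U^\perp$ with the flag correspondence $\Grdd\subseteq\Grd\times\GrV$: it determines $[\im\psi]$ by an explicit transversality/point count against products of Schubert cycles (Proposition \ref{prop:imphi}), then pulls back along $(\phi,\id)$, picking up the powers of $q$ from $U^\perp\simeq\sigma^*(V/U)^*$; the dual Cauchy identity only enters later, in the proof of Corollary \ref{cor:main}. You instead cut out $\XV$ inside $\GrV$ as the zero scheme of the canonical section of $\HOM(\SGr^\perp,\QGr)\simeq\sigma^*\QGr\otimes\QGr$ and identify $[\XV]$ with the top Chern class, which the dual Cauchy identity (applied to Chern roots, with the Frobenius twist scaling roots by $q$) expands as $\sum_c q^{|c|}\sigma_c\sigma_{\widehat c'}$; multiplying by $\sigma_1^d$ then recovers \eqref{eq:schubertmain} exactly as in the paper's final step. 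Your approach buys a shorter, more structural derivation of the class formula, bypassing the transversality analysis and making the appearance of the dual Cauchy identity intrinsic rather than an afterthought; the paper's approach, in exchange, yields the class of the correspondence $[\im\psi]$ itself and stays within elementary Schubert duality. Two points to tighten: the bundle identification you flag as the main obstacle is exactly Lemma \ref{lem:bunGr}(\ref{item:b2}), whose proof (nondegeneracy of the $\sigma$-sesquilinear form giving $V\otimes\mathcal{O}\simeq(\sigma^*(V\otimes\mathcal{O}))^*$, hence $\SGr^\perp\simeq(\sigma^*\QGr)^*$ as the kernel of $V\otimes\mathcal{O}\twoheadrightarrow(\sigma^*\SGr)^*$) works verbatim on all of $\GrV$, so it is available; and for the regularity step you should say explicitly that the zero scheme of the section is by definition the scheme $\XV$, which is smooth, irreducible of dimension $d$ (Proposition \ref{prop:tate}(\ref{item:1})), hence reduced of pure codimension $d^2=\operatorname{rank}$, so on the smooth (Cohen--Macaulay) variety $\GrV$ the localized top Chern class gives $[\XV]=c_{d^2}(\sigma^*\QGr\otimes\QGr)$ with multiplicity one.
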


  Combining Theorems \ref{thm:mainA} and \ref{thm:mainB} we deduce several combinatorial identities.
  \begin{corollary}\label{cor:main}
    \begin{altenumerate}
    \item\label{item:m0} The right hand side of (\ref{eq:schubertmain}) is equal to $\prod_{i=1}^d{\frac{1-q^{2i}}{1-q}}$.
    \item\label{item:m2} The $x_1^{2d}x_2^{2d-1}\cdots x_d^{d+1}$-coefficient of $$\left(\prod_{i,j=1}^d(qx_i+x_j)\right)(x_1+\cdots+x_d)^d\left(\prod_{i<j}(x_i-x_j)\right)$$ is equal to $\prod_{i=1}^d{\frac{1-q^{2i}}{1-q}}$.      
    \item\label{item:m1} Let $\Ell\ge0$. The following two sets have the same size:
      \begin{altitemize}
      \item The set of standard Young tableaux of skew shape $(\widehat c')^*/c$, where $c$ runs over $(c_1,\ldots,c_d)$ such that $d\ge c_1\ge \cdots \ge c_d\ge0$ and $|c|=\Ell$. Here $(\widehat c')^*$ is the \emph{dual} of $\widehat c'$ (recalled in \S\ref{sec:remind-schub-calc}).
      \item The set of  ordered partitions $(\Ell_1,\ldots, \Ell_d)$ of $\Ell$ satisfying  $\Ell=\Ell_1+\cdots+\Ell_d$ and $0\le \Ell_i\le 2i-1$ for all $1\le i\le d$.
      \end{altitemize}
    \end{altenumerate}
    \end{corollary}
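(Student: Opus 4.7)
My plan is to handle the three parts in turn, with Schubert calculus as the common framework. Part \eqref{item:m0} is immediate from combining Theorems \ref{thm:mainA} and \ref{thm:mainB}.

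For part \eqref{item:m1}, I will use Poincar\'e duality and Pieri's rule to recognize each summand in \eqref{eq:schubertmain} as a count of standard Young tableaux. On $\GrV$ the Poincar\'e dual of a Schubert class $\sigma_\lambda$ is $\sigma_{\lambda^*}$, the complement of $\lambda$ in the $(d+1)\times d$ box, and Pieri's rule yields $\sigma_\lambda\cdot\sigma_1^m=\sum_\mu f^{\mu/\lambda}\sigma_\mu$, where $f^{\mu/\lambda}$ denotes the number of standard Young tableaux of skew shape $\mu/\lambda$. Hence
\[
  \int_{\GrV}\sigma_c\,\sigma_{\widehat c'}\,\sigma_1^d \;=\; f^{(\widehat c')^*/c}.
\]
Summing against $q^{|c|}$ and invoking Theorems \ref{thm:mainA}--\ref{thm:mainB} gives
\[
\sum_{c} f^{(\widehat c')^*/c}\,q^{|c|} \;=\; \prod_{i=1}^d\frac{1-q^{2i}}{1-q} \;=\; \prod_{i=1}^d\bigl(1+q+\cdots+q^{2i-1}\bigr).
\]
The right-hand side is manifestly the generating function for ordered partitions $(\ell_1,\ldots,\ell_d)$ with $0\le \ell_i\le 2i-1$; extracting the coefficient of $q^\ell$ on each side yields \eqref{item:m1}.

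For part \eqref{item:m2}, I will pass from Chow groups to Chern roots of the tautological quotient bundle on $\GrV$, which has rank $d$; write $x_1,\dots,x_d$ for these roots. Under the standard identification $\sigma_\lambda\leftrightarrow s_{\lambda'}(x_1,\ldots,x_d)$ the relevant classes become
\[
\sigma_c \leftrightarrow s_{c'}(x),\qquad \sigma_{\widehat c'} \leftrightarrow s_{\widehat c}(x),\qquad \sigma_1 \leftrightarrow x_1+\cdots+x_d.
\]
The key step is to evaluate
\[
\sum_{c\subseteq d\times d}\,s_{c'}(x)\,s_{\widehat c}(x)\,q^{|c|} \;=\; \prod_{i,j=1}^d(qx_i+x_j),
\]
which I will obtain by combining the complement identity $s_{\widehat c}(x)=(x_1\cdots x_d)^d\,s_c(x_1^{-1},\ldots,x_d^{-1})$ with the dual Cauchy identity $\sum_{\lambda\subseteq d\times d}s_\lambda(x)s_{\lambda'}(y)=\prod_{i,j}(1+x_iy_j)$ specialized to $y_j=q/x_j$. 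The Weyl-type integration formula
\[
\int_{\GrV} f(x) \;=\; \bigl[x_1^{2d}x_2^{2d-1}\cdots x_d^{d+1}\bigr]\!\left(f(x)\prod_{i<j}(x_i-x_j)\right)
\]
applied to $f(x)=(x_1+\cdots+x_d)^d\,\prod_{i,j}(qx_i+x_j)$, together with Theorems \ref{thm:mainA}--\ref{thm:mainB}, then gives \eqref{item:m2}.

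The main obstacle I anticipate is the combinatorial bookkeeping across two ambient boxes: the $d\times d$ box in which the complementation $c\mapsto\widehat c$ lives, versus the $(d+1)\times d$ box governing Poincar\'e duality on $\GrV$. One must also track conjugation carefully, since the Schubert/Schur dictionary sends a partition to its transpose. Once these identifications are pinned down, the dual-Cauchy manipulation for \eqref{item:m2} and the skew-SYT reading for \eqref{item:m1} are routine applications of standard identities; the rest of the corollary is bookkeeping.
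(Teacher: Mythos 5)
Your proposal is correct and follows essentially the same route as the paper: (\ref{item:m0}) by combining Theorems \ref{thm:mainA} and \ref{thm:mainB}, (\ref{item:m1}) by Pieri plus Poincar\'e duality identifying each term with a skew standard Young tableau count and then extracting $q^\Ell$-coefficients, and (\ref{item:m2}) by a dual Cauchy evaluation followed by Vandermonde/alternant coefficient extraction of the point class. The only difference is cosmetic: you phrase the Schur dictionary via the quotient-bundle Chern roots ($\sigma_\lambda\leftrightarrow s_{\lambda'}$) and rederive the homogeneous dual Cauchy identity from the $\prod(1+x_iy_j)$ form plus the complementation identity, whereas the paper cites Macdonald's identity $\sum_c S_c(x)S_{\widehat c'}(y)=\prod_{i,j}(x_i+y_j)$ directly; the bookkeeping you flag (square box for complementation, $(d+1)\times d$ box for duality, transposes) is handled consistently in your identifications.
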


Theorem \ref{thm:mainB} and Corollary \ref{cor:main} will be proved \S\ref{sec:proof-theor-refthm:m-1} and \S\ref{sec:proof-coroll-refc} respectively. We notice that the combinatorial identities in Corollary \ref{cor:main} seem to be quite nontrivial even for small values of $d$ (see Example \ref{exa:smalld}). It would be very interesting to find more direct combinatorial proofs of these identities.

\subsection{Acknowledgments} It is the author's pleasure to dedicate this paper to Steve Kudla on the occasion of his 70th birthday. The influence of his original insights on the geometric and arithmetic Siegel--Weil formula on this paper should be evident to the readers. The author is also grateful to Z. Yun and W. Zhang for helpful conversations, and to M. Rapoport and the anonymous referee for useful comments. The author's work is partially supported by the NSF grant DMS-2101157.

\subsection{Notation}\label{sec:notations}

Let $X$ be a smooth projective variety over a finite
field $k$. Let $\Ch^r(X_\kb)$ be the Chow group of codimension $r$ algebraic cycles of $X_\kb$ defined modulo rational equivalence. Fix a prime $\ell\ne \charac(k)$, denote by $$\cl_r: \Ch^r(X_\kb)_\mathbb{Q}\rightarrow \HH^{2r}(X_\kb, \mathbb{Q}_\ell)(r)$$ the $\ell$-adic cycle class map.
Denote by $\Tate^{2r}_\ell(X_\kb)\subseteq \HH^{2r}(X_\kb,\mathbb{Q}_\ell)(r)$ the subspace of Tate classes, i.e., the elements fixed by an open subgroup of $\Gal(\kb/k)$.  Then $\cl_r$ intertwines the intersection product $\cdot$ on the Chow ring and the cup product $\cup$ on the cohomology ring, namely the following diagram commutes,
\begin{equation}
  \label{eq:chcl}
\begin{gathered}
  \xymatrix@C=0.3em{\Ch^r(X_\kb)_\mathbb{Q} \ar[d]^{\cl_r} &\times & \Ch^s(X_\kb)_\mathbb{Q} \ar[d]^{\cl_s} \ar[rr]^-{\cdot} && \Ch^{r+s}(X_\kb)_\mathbb{Q} \ar[d]^{\cl_{r+s}} \\ \HH^{2r}(X_\kb,\mathbb{Q}_\ell)(r) &\times & \HH^{2s}(X_\kb, \mathbb{Q}_\ell)(s) \ar[rr]^-{\cup} &&  \HH^{2(r+s)}(X_\kb, \mathbb{Q}_\ell)(r+s).}
\end{gathered}
\end{equation}  When $r=1$, the cycle class map $\cl_1:\Ch^1(X_\kb)_\mathbb{Q}\rightarrow \HH^2(X_\kb,\mathbb{Q}_\ell)(1)$ is injective.
When $r=\dim X$, we often identify $\Ch^{\dim X}(X_\kb)_\mathbb{Q}\simeq \mathbb{Q}$ via the degree isomorphism (cf. \cite[Example 1.6.6]{Ful98}). Recall that the Tate conjecture (\cite[Conjecture 1]{Tate1965}, or \cite[Conjecture $T^i$]{Tate1994}) asserts that for any $r\ge0$, the image of $\ell$-adic cycle class map $\cl_r$ $\mathbb{Q}_\ell$-spans $\Tate^{2r}_\ell(X_\kb)$.

For a subvariety $Z\subseteq X$ of pure codimension $r$, we denote by $[Z]_X$ its class in $\Ch^r(X_\kb)$. When the ambient variety $X$ is clear we suppress the subscript and simply write $[Z]$.   For any vector bundle $\mathcal{V}$ on $X$, denote by $c_r(\mathcal{V})\in \Ch^r(X_\kb)$ its $r$-th Chern class. By abusing notation we also denote by the same symbols $[Z]_X$, $[Z]$ and $c_r(\mathcal{V})$ their images in $\Ch^r(X_\kb)_\mathbb{Q}$. For any vector bundle $\mathcal{V}$ on $X$, denote by $\mathcal{V}^*:=\HOM_{\mathcal{O}_X}(\mathcal{V},\mathcal{O}_X)$ its dual bundle.

\section{Degrees of unitary Deligne--Lusztig varieties}

Let $p$ be a prime and let $q$ be a power of $p$. Let $\k=\mathbb{F}_q$ be a finite field of size $q$. Let $k=\mathbb{F}_{q^2}$ be the quadratic extension of $\k$. Denote by $\sigma$ the absolute $q$-Frobenius endomorphism on any scheme of characteristic $p$.

\subsection{Hermitian spaces}
Let $V$ be a (nondegenerate) \emph{$k/\k$-hermitian space} of dimension $n$, i.e., a $k$-vector space of dimension $n$ equipped with a (nondegenerate) paring $(\ ,\ ): V\times V\rightarrow k$ that is linear in the first variable, $\sigma$-linear in the second variable and satisfies
\begin{equation}
  \label{eq:symmetry}
  (x,y)=(y,x)^\sigma
\end{equation}
for any $x,y\in V$. For any $k$-subspace $U\subseteq V$, denote by $$U^\perp=\{x\in V: (x,U)=0\}$$ its orthogonal complement.

More generally, for any $k$-scheme $S$, put $V_S:=V \otimes_{k}\mathcal{O}_S$. Then there is a unique pairing $(\ ,\ )_S: V_S\times V_S\rightarrow \mathcal{O}_S$  extending $(\ ,\ )$ that is $\mathcal{O}_S$-linear in the first variable and $(\mathcal{O}_S,\sigma)$-linear in the second variable, given by $$(x \otimes \lambda, y\otimes\mu)=\lambda \mu^\sigma (x,y)$$ for any $x,y\in V$ and $\lambda,\mu\in \mathcal{O}_S$. For any subbundle $U\subseteq V_S$, we denote by $$U^\perp=\{x\in V_S: (x,U)=0\}$$ its left orthogonal complement. (Notice that unlike the case $S=\Spec k$, in general the symmetry \eqref{eq:symmetry} does not necessarily hold for $x,y\in V_S$, and the left orthogonal complement $U^\perp$ does not necessarily agree with the right orthogonal complement.)

\subsection{Unitary Deligne--Lusztig varieties and special cycles} From now on fix a $k/\k$-hermitian space $V$ of dimension $n=2d+1$ ($d\ge0$).

\begin{definition}\label{def:DLV}
 Define the \emph{unitary Deligne--Lusztig variety} $\XV\subseteq \Gr_{d+1}(V)$ to be the closed $k$-subscheme parametrizing $(d+1)$-dimensional subspaces $U$ such that $U^\perp\subseteq U$, i.e., for any $k$-scheme $S$, $$\XV(S)=\{\text{subbundles } U\subseteq V_S: \rank U=d+1,\ U^\perp\subseteq U\}.$$ 
\end{definition}

  The relation between $\XV$ and classical Deligne--Lusztig varieties associated to the unitary group $\U(V)$ can be found in \cite[Corollary 2.17]{Vollaard2010} and the reader can recognize that the Frobenius appearing in the definition of classical Deligne--Lusztig varieties arises from the $\sigma$-linearity in the second variable of the hermitian form $(\ ,\ )$.

\begin{definition}
  Define a \emph{special subspace} $W\subseteq V$ to be a $k$-subspace such that $W^\perp\subseteq W$. If $W$ is a special subspace, then $W/W^\perp$ is a (nondegenerate) $k/\k$-hermitian space under the pairing induced from $V$.
\end{definition}

\begin{definition}\label{def:specialcycle}
   Let $W\subseteq V$ be a   $k$-subspace.  Define the \emph{special cycle} $\ZW\subseteq \XV$ to be the closed subscheme parametrizing subspaces $U$ satisfying $U\subseteq W$, i.e., for any $k$-scheme $S$, $$\ZW(S)=\{U\in \XV(S): U\subseteq W_S \}.$$ By definition it is clear that  
   \begin{altitemize}
   \item $\ZW$ is nonempty only if $W\subseteq V$ is a \emph{special subspace}.     
   \item $Z(V)=\XV$,
   \item $\ZW\cap \ZWp=Z(W\cap W')$ for two subspaces $W,W'\subseteq V$ ($\cap$ denotes the scheme-theoretic intersection).
   \item $\ZW\subseteq \ZWp$ if $W\subseteq W'$.
   \end{altitemize}
 \end{definition}

We summarize several known results on $\XV$ and  $\ZW$ which we will need later.

\begin{proposition}\label{prop:tate}
  \begin{altenumerate}
  \item\label{item:1} $\XV$ is smooth, projective and geometrically irreducible of dimension $d$.
  \item Let $W\subseteq V$ be a special subspace of codimension $r$. Then there is a canonical isomorphism of $k$-schemes
    \begin{equation}
      \label{eq:ZWXW}
      \ZW\simeq\XW.
    \end{equation}
 In particular, $Z(W)$ is smooth, projective and geometrically irreducible, and has codimension $r$ in $\XV$, and we call $Z(W)$ a \emph{codimension $r$ special cycle in $\XV$.}    
  \item 
    The Tate conjecture (see \S\ref{sec:notations}) holds for $\XV$.
  \item\label{item:4} For any $r\ge0$, the space $\Tate_\ell^{2r}(\XVb)$ is spanned by the cycle classes of codimension $r$ special cycles $[\ZW]$, where $W\subseteq V$ runs over all special subspace of codimension $r$.
  \item\label{item:5} For any $r\ge0$, the cup product induces a perfect pairing $$\Tate_\ell^{2r}(\XVb)\times \Tate_\ell^{2d-2r}(\XVb)\xrightarrow{\cdot} \mathbb{Q}_\ell.$$
  \end{altenumerate}
\end{proposition}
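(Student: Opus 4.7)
The plan is to prove the five parts in order, using Deligne--Lusztig theory for (1) and (2), and an affine cell decomposition for (3)--(5). Part (1) is standard: I would invoke the identification of $\XV$ with the generalized Deligne--Lusztig variety attached to a maximal parabolic of $\U(V)$, from which smoothness, projectivity, geometric irreducibility, and dimension $d$ all follow (\cite[\S 4.5]{Vollaard2011}). For part (2), the plan is to write the isomorphism $\ZW \simeq \XW$ explicitly via $U \mapsto U/W^\perp_S$. When $W^\perp \subseteq W$ and $U \subseteq W_S$ has rank $d+1$ with $U^\perp \subseteq U$, the chain $W^\perp_S \subseteq U^\perp \subseteq U \subseteq W_S$ shows that $\overline U := U/W^\perp_S$ is a rank-$(d+1-r)$ subbundle of the hermitian bundle $(W/W^\perp)_S$ of rank $n-2r = 2(d-r)+1$; a short diagram chase identifies its orthogonal complement in $(W/W^\perp)_S$ with $U^\perp/W^\perp_S \subseteq \overline U$, and the inverse takes preimages. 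Applying (1) to the hermitian space $W/W^\perp$ then yields the claimed smoothness, irreducibility, and codimension $r$.

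For parts (3)--(5) the plan is to exploit an affine cell decomposition of $\XV$. Starting from a self-dual (hermitian) full flag in $V$, I would intersect the associated Schubert stratification of $\Gr_{d+1}(V)$ with $\XV$ to obtain a stratification whose nonempty strata become affine spaces after base change to $\kb$. This affine paving forces $\HH^*(\XVb, \mathbb{Q}_\ell)$ to vanish in odd degrees and to be pure of Tate type in every even degree, and the closures of the cells span $\HH^{2r}(\XVb, \mathbb{Q}_\ell)(r)$ for each $r$. For the particular choice of self-dual flag, each stratum closure coincides with a special cycle $\ZW$ for an appropriate special subspace $W$ of codimension $r$, so the cycle class map $\cl_r$ surjects onto $\Tate^{2r}_\ell(\XVb)$ with image spanned by $\{[\ZW]\}$; this simultaneously yields the Tate conjecture for $\XV$ and (4). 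Part (5) then follows immediately from Poincar\'e duality on the smooth projective $\XV$ of dimension $d$: since all cohomology is already Tate, the perfect pairing $\HH^{2r}(\XVb,\mathbb{Q}_\ell)(r) \times \HH^{2d-2r}(\XVb,\mathbb{Q}_\ell)(d-r) \to \mathbb{Q}_\ell$ coincides with its restriction to Tate classes.

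The main obstacle will be the geometric identification of Schubert stratum closures with special cycles in the sense of Definition \ref{def:specialcycle}. A generic Schubert cell on $\Gr_{d+1}(V)$ is cut out by an arbitrary flag, whereas a special cycle $\ZW$ requires $W^\perp \subseteq W$. The resolution is to use precisely a self-dual flag, so that each Schubert condition is imposed by a special subspace; this matching is carried out in the Bruhat--Tits stratification analysis of \cite{Vollaard2010, Vollaard2011}, and it is exactly the inductive structure $\ZW \simeq \XW$ that will drive the degree computation in Theorem \ref{thm:mainA}.
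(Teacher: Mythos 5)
Your treatment of parts (1) and (2) matches the paper: (1) is quoted from \cite[Lemma 4.5]{Vollaard2011} (or \cite[Proposition A.1.3]{LTXZZ}), and for (2) the paper writes down exactly the map $U\mapsto U/W^\perp_S$ with inverse $\oU\mapsto W^\perp_S+\oU$ and then applies (1) to $W/W^\perp$. Up to this point there is nothing to object to.

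The argument you propose for (3)--(5), however, has a genuine gap: $\XV$ does not admit an affine cell decomposition, so the whole paving strategy collapses. Already for $d=1$ the variety $\XV$ is the Fermat curve $x^{q+1}+y^{q+1}+z^{q+1}=0$ (Example \ref{exa:fermat}), a smooth curve of genus $q(q-1)/2>0$, so $\HH^1(\XVb,\mathbb{Q}_\ell)\neq 0$; a variety paved by affine spaces has vanishing odd-degree $\ell$-adic cohomology, so no such paving can exist, and the same obstruction persists for all $d\ge 1$. The natural stratification of $\XV$ coming from a (self-dual) flag, i.e.\ the Bruhat--Tits/Ekedahl--Oort type stratification studied in \cite{Vollaard2010,Vollaard2011}, has as strata smaller \emph{open} Deligne--Lusztig varieties, which are affine but are not affine spaces, and their closures are the special cycles $\ZW$ --- this gives the inductive structure used elsewhere in the paper but not a paving. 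Consequently your intermediate claims that $\HH^{2r}(\XVb,\mathbb{Q}_\ell)$ is entirely of Tate type and spanned by cell closures are false in general (even-degree cohomology of these varieties contains non-Tate Weil classes for $d\ge 2$), and (5) does not simply drop out of Poincar\'e duality: perfectness of the cup product restricted to the Tate subspaces requires knowing how the Tate part splits off inside cohomology. The paper's route for (3)--(5) is to invoke \cite[Theorem 5.3.2 (i)]{LZ} (together with \cite[Corollary 2.17]{Vollaard2010}) and \cite[(6.4.0.4)]{LZ}, whose proofs go through Lusztig's computation of the cohomology of Deligne--Lusztig varieties and an explicit determination of Frobenius eigenvalues, not through an affine paving; if you want a self-contained argument you would need to reproduce that representation-theoretic analysis rather than assert a cell decomposition.
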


\begin{proof}
  \begin{altenumerate}
  \item This is \cite[Lemma 4.5]{Vollaard2011}, or the more general \cite[Proposition A.1.3 (2)]{LTXZZ}. 
  \item Consider the morphism $\ZW\rightarrow \XW$ defined by $$\ZW(S)\rightarrow \XW(S),\quad U\mapsto \oU=U/W^\perp_S$$ for any $k$-scheme $S$. It is an isomorphism,  with inverse given by the morphism defined by $\oU\mapsto W^\perp_S+ \oU$. The rest follows from Item  (\ref{item:1}) since $W/W^\perp$ has $k$-dimension $n-2r$.
  \item This is \cite[Theorem 5.3.2 (i)]{LZ}.
  \item This is a combination of \cite[Theorem 5.3.2 (i)]{LZ} and \cite[Corollary 2.17]{Vollaard2010}.
  \item This is \cite[(6.4.0.4)]{LZ}.
\qedhere
  \end{altenumerate}
\end{proof}

\subsection{Natural vector bundles on $\XV$ and $\ZW$}

\begin{definition}\label{def:speciallinebundle}
 Define
  \begin{altitemize}
  \item   $\VX:=V \otimes_k \mathcal{O}_{\XV}$ the universal bundle on $\XV$,
  \item $\UX\subseteq \VX$ and $\UX^\perp\subseteq\VX$ the two universal subbundles on $\XV$,
  \item $\QX:=\VX/\UX$ the universal quotient bundle on $\XV$,
  \item $\LX:=\UX/\UX^\perp,$ which we call the \emph{special line bundle} on $\XV$.
  \end{altitemize}

  Similarly for a special cycle $\ZW\subseteq\XV$ associated to a special subspace $W\subseteq V$, define
  \begin{altitemize}
  \item $\VW:=(W/W^\perp) \otimes_k \mathcal{O}_{\ZW}$ the universal bundle on $\ZW\simeq\XW$,
  \item $\UW\subseteq \VW$ and $\UW^\perp\subseteq\VW$ the two universal subbundles on $\ZW$,
  \item $\QW:=\VW/\UW$ the universal quotient bundle on $\ZW$,
  \item $\LW:=\UW/\UW^\perp,$ which we call the \emph{special line bundle} on $\ZW$.
  \end{altitemize}
\end{definition}

Denote by $$j_W: \ZW\hookrightarrow \XV$$ the natural closed embedding.  By definition, we have 
\begin{equation}
  \label{eq:jW}
   \LW=j_W^*\LX. 
\end{equation}
Also by definition, the canonical isomorphism \eqref{eq:ZWXW} induces a canonical isomorphism $\LW\simeq \LXW$ and thus
\begin{equation}
  \label{eq:LXLWX}
  j_W^*\LX\simeq \LXW.
\end{equation}

\begin{proposition} Let $W\subseteq V$ be a special subspace of codimension $r$. 
  \begin{altenumerate}
  \item We have a canonical isomorphism
\begin{equation}
  \label{eq:QX}
  j_W^*\QX/\QW\simeq (V/W)\otimes_k \mathcal{O}_{\ZW}.
\end{equation} 
\item Let $\mathcal{N}_{\ZW/\XV}$ be the normal bundle of $\ZW$ in $\XV$. Then we have an isomorphism $$\mathcal{N}_{\ZW/\XV}\simeq \left(\LW^{\oplus r}\right)^*.$$   In particular, when $W\subseteq V$ has codimension 1, we have an isomorphism of line bundles
  \begin{equation}
    \label{eq:linebundle}
    \mathcal{N}_{\ZW/\XV}\simeq \LW^*.
  \end{equation}
  \end{altenumerate}
\end{proposition}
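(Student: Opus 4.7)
The plan is to first identify the tangent bundle $T_\XV \simeq \LX^* \otimes \QX$ by a first-order deformation computation, and then to combine this with part~(1) to read off the normal bundle.

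For part~(1), note that on $\ZW$ the universal subbundle $j_W^*\UX$ is contained in $W \otimes_k \mathcal{O}_{\ZW}$, while the identification $\ZW \simeq \XW$ gives a canonical isomorphism $(W \otimes_k \mathcal{O}_{\ZW})/j_W^*\UX \simeq \QW$. The inclusion $W \otimes_k \mathcal{O}_{\ZW} \hookrightarrow V \otimes_k \mathcal{O}_{\ZW}$ therefore descends modulo $j_W^*\UX$ to an inclusion $\QW \hookrightarrow j_W^*\QX$ whose cokernel is $(V/W) \otimes_k \mathcal{O}_{\ZW}$, giving \eqref{eq:QX}.

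For part~(2), fix $U_0 \in \XV(k)$; any first-order lift $U_\epsilon \subseteq V \otimes_k R$ with $R := k[\epsilon]/(\epsilon^2)$ is parametrized by some $\phi \in \Hom_k(U_0, V/U_0)$. The crucial observation is that $\sigma = \mathrm{Frob}_q$ satisfies $\epsilon^\sigma = \epsilon^q = 0$ in $R$ (since $q \geq 2$), so the induced pairing on $V_R$ simplifies to
\[
(v_1 + \epsilon w_1,\, v_2 + \epsilon w_2)_R = (v_1, v_2) + \epsilon (w_1, v_2),
\]
depending only on the constant term of the second argument. Consequently $U_\epsilon^\perp = U_0^\perp \otimes_k R$, and the condition $U_\epsilon^\perp \subseteq U_\epsilon$ reduces to the $k$-linear condition $\phi|_{U_0^\perp} = 0$. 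This identifies $T_\XV|_{U_0}$ with $\Hom(U_0/U_0^\perp,\, V/U_0)$, i.e., the fiber of $\LX^* \otimes \QX$ at $U_0$. Since both $T_\XV$ and $\LX^* \otimes \QX$ are rank-$d$ subbundles of $\UX^* \otimes \QX = T_{\Gr_{d+1}(V)}|_\XV$ with matching fibers, they coincide. The same argument applied to $\XW$ yields $T_\ZW \simeq \LW^* \otimes \QW$.

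Via the isomorphism $\LX|_{\ZW} \simeq \LW$ from \eqref{eq:LXLWX}, the differential $dj_W: T_\ZW \to T_\XV|_{\ZW}$ is identified with $\LW^* \otimes \QW \hookrightarrow \LW^* \otimes j_W^*\QX$ induced by the inclusion of part~(1). The cokernel is
\[
\mathcal{N}_{\ZW/\XV} \simeq \LW^* \otimes_{\mathcal{O}_{\ZW}} \bigl((V/W) \otimes_k \mathcal{O}_{\ZW}\bigr) \simeq \bigl(\LW^{\oplus r}\bigr)^*,
\]
where the final step uses that $V/W$ has $k$-dimension $r$; specializing to $r=1$ gives \eqref{eq:linebundle}. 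The main obstacle is the deformation-theoretic identification of $T_\XV$; once the simplification $\epsilon^\sigma = 0$ is noticed, the remaining steps are formal.
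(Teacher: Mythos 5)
Your argument is correct, and it differs from the paper's in one substantive way: the paper obtains the tangent bundle identifications $\mathcal{T}_{\XV/k}\simeq\HOM(\LX,\QX)$ and $\mathcal{T}_{\ZW/k}\simeq\HOM(\LW,\QW)$ by citing \cite[Proposition A.1.3 (2)]{LTXZZ}, whereas you prove them from scratch by a first-order deformation computation, the key point being exactly the one you isolate: $\epsilon^\sigma=\epsilon^q=0$ in $k[\epsilon]/(\epsilon^2)$, so the hermitian pairing on $V_R$ depends only on the reduction of its second argument, whence $U_\epsilon^\perp=U_0^\perp\otimes_k R$ and the condition $U_\epsilon^\perp\subseteq U_\epsilon$ becomes $\phi|_{U_0^\perp}=0$. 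That computation is right (note it uses $U_0^\perp\subseteq U_0$ to get $\epsilon\,U_0^\perp\subseteq U_\epsilon$ for free), and your upgrade from fibers to bundles is legitimate: two subbundles of $\UX^*\otimes\QX$ over the smooth (hence reduced) variety $\XV$ with equal fibers at all closed points coincide; alternatively, your computation works verbatim over any $k$-algebra $S$ and square-zero extension, which gives the identification functorially. From there your use of the normal bundle exact sequence, the isomorphism $j_W^*\LX\simeq\LW$, and part (1) to get $\mathcal{N}_{\ZW/\XV}\simeq\HOM(\LW,(V/W)\otimes_k\mathcal{O}_{\ZW})\simeq(\LW^{\oplus r})^*$ is exactly the paper's argument. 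What your route buys is self-containedness and an explanation of \emph{why} the tangent space is $\Hom(U/U^\perp,V/U)$ rather than the naive $\Hom$ cut out by a hermitian condition; what the citation buys is brevity and a statement already packaged at the level of bundles. One small point shared with the paper and left implicit in both write-ups: the compatibility of $dj_W$ with the natural map $\HOM(\LW,\QW)\to\HOM(\LW,j_W^*\QX)$ under these identifications, which is immediate from the functorial description but worth a sentence.
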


\begin{proof}
  \begin{altenumerate}
  \item The result follows immediately from the definition of $\QX$ and $\QW$.
  \item   By \cite[Proposition A.1.3 (2)]{LTXZZ}, we have canonical isomorphisms for the tangent bundles
  \begin{equation*}
    \mathcal{T}_{\XV/k}\simeq\HOM(\LX, \QX), \quad    \mathcal{T}_{\ZW/k}\simeq\HOM(\LW, \QW).
  \end{equation*} Hence by the exact sequence $$0\rightarrow \mathcal{T}_{\ZW/k}\rightarrow j_W^*\mathcal{T}_{\XV/k}\rightarrow \mathcal{N}_{\ZW/\XV}\rightarrow 0$$ and \eqref{eq:jW}, we obtain a canonical isomorphism $$\mathcal{N}_{\ZW/\XV}\simeq \HOM(\LW, j_W^*\QX/\QW).$$ The result then follows from \eqref{eq:QX} as $(V/W) \otimes_k \mathcal{O}_{\ZW}\simeq \mathcal{O}_{\ZW}^r$. \qedhere
  \end{altenumerate}
\end{proof}

\subsection{Relations with Grassmannians}

For $0\le m\le n$, let $\Gr_{m}(V)$ be the Grassmannian of $m$-dimensional subspaces of $V$, which is a smooth projective variety over $k$ of dimension $m(n-m)$. We have the \emph{Pl\"ucker embedding} $$\pl:\Gr_m(V)\hookrightarrow \mathbb{P}(\wedge^m(V))\simeq \mathbb{P}^{N},\quad N=\textstyle{n\choose m}-1,$$ defined by sending an $m$-dimensional subspace with basis $\{e_1,\ldots,e_m\}$ to the line generated by $e_1\wedge \cdots \wedge e_m\in \wedge^m(V)$ (independent of the choice of the basis). 

\begin{definition}
  Define $\SGr$ (resp. $\QGr$) to be the universal subbundle (resp. universal quotient subbundle) on $\GrV$. Denote by $$i:\XV\hookrightarrow \GrV$$ the natural closed embedding.
\end{definition}

\begin{lemma}\label{lem:bunGr} The following identities holds:
  \begin{altenumerate}
  \item\label{item:b1} $\UX=i^*\SGr$,
  \item\label{item:b2} $\UX^\perp\simeq i^*(\sigma^*\QGr)^*$,
  \item\label{item:b3} $c_1(\SGr)=-c_1(\QGr)$,    
  \item\label{item:b4} $c_1(\LX)=(1-q) i^*c_1(\SGr)$.
  \end{altenumerate}
\end{lemma}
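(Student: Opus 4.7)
The plan is to dispatch the four items in order: (1) and (3) by unwinding definitions, (2) by translating the hermitian pairing into a Frobenius-twisted duality on $\GrV$, and (4) by pure Chern-class bookkeeping.

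For (1), the identity $\UX = i^*\SGr$ is tautological: the universal $(d{+}1)$-dimensional subbundle parametrized by $\XV$ is by construction the restriction of $\SGr$ along the closed embedding $i$. For (3), applying the additivity of $c_1$ to the tautological sequence $0 \to \SGr \to V \otimes_k \mathcal{O}_{\GrV} \to \QGr \to 0$ and using the triviality of the middle bundle immediately gives $c_1(\SGr) + c_1(\QGr) = 0$.

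The substantive step is (2). The hermitian form, being $\sigma$-semilinear in the second variable, produces a globally defined $\mathcal{O}_{\GrV}$-linear isomorphism $\Phi : \sigma^*(V \otimes_k \mathcal{O}_{\GrV}) \xrightarrow{\sim} (V \otimes_k \mathcal{O}_{\GrV})^*$. For any subbundle $U \subseteq V_S$, unwinding the definition $U^\perp = \{x : (x, U) = 0\}$ exhibits $U^\perp$ as the kernel of the composition $V_S \xrightarrow{\Phi^*} (\sigma^* V_S)^* \twoheadrightarrow (\sigma^* U)^*$ obtained by dualizing the inclusion $\sigma^* U \hookrightarrow \sigma^* V_S$. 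Globalizing on $\GrV$: dualize the $\sigma^*$-pullback of the tautological sequence and use $\Phi$ to identify the middle term of the resulting dualized sequence with $V \otimes_k \mathcal{O}_{\GrV}$. This produces
\[
0 \to (\sigma^* \QGr)^* \to V \otimes_k \mathcal{O}_{\GrV} \to (\sigma^* \SGr)^* \to 0
\]
on $\GrV$, whose restriction along $i$ matches the corresponding sequence $0 \to \UX^\perp \to \VX \to (\sigma^* \UX)^* \to 0$ on $\XV$ termwise, yielding $\UX^\perp \simeq i^*(\sigma^* \QGr)^*$.

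For (4), combine the pieces with the standard facts that pullback commutes with Chern classes, dualization negates $c_1$, and absolute Frobenius multiplies $c_1$ by $q$. From $\LX = \UX/\UX^\perp$ we have $c_1(\LX) = c_1(\UX) - c_1(\UX^\perp)$; part (1) gives $c_1(\UX) = i^* c_1(\SGr)$, and parts (2) and (3) together give $c_1(\UX^\perp) = -q \cdot i^* c_1(\QGr) = q \cdot i^* c_1(\SGr)$. Subtracting yields $c_1(\LX) = (1-q)\, i^* c_1(\SGr)$. The main obstacle is part (2), where one must carefully track how the sesquilinear form interacts with Frobenius twists and dualization in order to match the two exact sequences termwise; everything else reduces to routine Chern-class arithmetic.
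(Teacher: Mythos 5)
Your proof is correct and takes essentially the same route as the paper: (1) and (3) are read off from the definitions and the tautological sequence, (2) comes from linearizing the nondegenerate $\sigma$-sesquilinear form into an isomorphism $V\otimes_k\mathcal{O}\simeq(\sigma^*(V\otimes_k\mathcal{O}))^*$ identifying $\UX^\perp$ with $i^*(\sigma^*\QGr)^*$, and (4) is the same Chern-class bookkeeping using that Frobenius pullback multiplies $c_1$ by $q$. The only difference is that you spell out item (2) in more detail than the paper's one-line justification, which is a welcome expansion rather than a departure.
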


\begin{proof}
  \begin{altenumerate}
  \item It follows from the definition of $\SGr$ and $\UX$.
  \item It follows from Item (\ref{item:b1}) and the nondegenerate pairing $(\ ,\ )$ ($\sigma$-linear in the second variable).
  \item It follows from the defining exact sequence \begin{equation*}
    0\rightarrow \SGr\rightarrow (V \otimes_k \mathcal{O}_{\GrV})\rightarrow \QGr\rightarrow 0.
  \end{equation*}
\item By the definition of $\LX$ together with Item (\ref{item:b1}) and Item (\ref{item:b2}), we have $$c_1(\LX)=c_1(\UX)-c_1(\UX^\perp)=i^*c_1(\SGr)-i^*c_1((\sigma^*\QGr)^*).$$ By Item (\ref{item:b3}), this evaluates to $$i^*c_1(\SGr)-q(i^*c_1(\SGr))=(1-q) i^*c_1(\SGr),$$ as desired. \qedhere
  \end{altenumerate}
\end{proof}

\begin{example}\label{exa:fermat}
  When $d=1$ (i.e., when $\dim V=3$), we have $\GrV\simeq \mathbb{P}^2$ with
  $\QGr\simeq \mathcal{O}_{\mathbb{P}^2}(1)$. Thus $c_1(\SGr)=c_1(\mathcal{O}_{\mathbb{P}^2}(-1))$ by Lemma \ref{lem:bunGr} (\ref{item:b3}). By definition $\XV\subseteq \GrV$ is isomorphic to a Fermat curve of degree $1+q$ (\cite[Remark 4.7]{Vollaard2010}), $$\{[x,y,z]\in \mathbb{P}^2: x^{q+1}+y^{q+1}+z^{q+1}=0\}\subseteq \mathbb{P}^2.$$
Hence by Lemma \ref{lem:bunGr} (\ref{item:b4}),
the following identity holds in $\Ch^1(\XVb)_\mathbb{Q}\simeq \mathbb{Q}$,
 \begin{equation*}
    c_1(\LX)=(1-q)i^*c_1(\mathcal{O}_{\mathbb{P}^2}(-1))=-(1-q)(1+q)=-(1-q^2),
  \end{equation*}
  and so
  \begin{equation}
        \label{eq:fermat}
        c_1(\LX^*)=-c_1(\LX)=1-q^2.
  \end{equation}
\end{example}

\subsection{Chern classes of the special line bundle}

\begin{lemma}\label{lem:c1ZW}
  Let $\ZW\subseteq\XV$ be a 1-dimensional special cycle associated to a special subspace $W\subseteq V$ of codimension $d-1$. Then the following identity holds in $\Ch^{d}(\XVb)_\mathbb{Q}\simeq \mathbb{Q}$: $$c_1(\LX^*)\cdot [\ZW]=1-q^2.$$
\end{lemma}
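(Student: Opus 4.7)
The plan is to reduce the computation to the $d=1$ Fermat-curve case via the projection formula along the closed embedding $j_W \colon \ZW \hookrightarrow \XV$.

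First I would rewrite the left-hand side using the projection formula. The cycle class $[\ZW] \in \Ch^{d-1}(\XV_\kb)_\mathbb{Q}$ is the pushforward of the fundamental class of $\ZW$, so
\[
c_1(\LX^*)\cdot[\ZW] \;=\; (j_W)_*\bigl(j_W^*c_1(\LX^*)\cdot[\ZW]\bigr)\quad\text{in }\Ch^d(\XV_\kb)_\mathbb{Q}.
\]
The pullback $j_W^*c_1(\LX^*)=c_1(j_W^*\LX^*)$ equals $c_1(\LW^*)$ by \eqref{eq:jW}, and under the canonical isomorphism \eqref{eq:ZWXW} this identifies with $c_1(\LXW^*)$ on $\XW$ via \eqref{eq:LXLWX}.

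Next, observe that since $W \subseteq V$ is special of codimension $d-1$, the hermitian quotient $W/W^\perp$ has dimension $n-2(d-1)=3$. Thus $\XW$ is exactly the setting of Example \ref{exa:fermat}, i.e.\ a Fermat curve of degree $1+q$ in $\mathbb{P}^2$, and the formula \eqref{eq:fermat} gives $c_1(\LXW^*)=1-q^2$ in $\Ch^1(\XW_\kb)_\mathbb{Q}\simeq \mathbb{Q}$.

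Finally, since the proper pushforward $(j_W)_*$ is compatible with the degree isomorphism on top-dimensional Chow groups, transporting this back to $\Ch^d(\XV_\kb)_\mathbb{Q}\simeq \mathbb{Q}$ yields $c_1(\LX^*)\cdot[\ZW]=1-q^2$, as claimed. There is no real obstacle here: the content of the lemma is entirely in recognizing that the one-dimensional special cycle $\ZW$ is itself a unitary Deligne--Lusztig variety associated to a $3$-dimensional hermitian space, so that the base case from Example \ref{exa:fermat} applies after invoking the projection formula and the functoriality \eqref{eq:LXLWX} of the special line bundle.
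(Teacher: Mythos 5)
Your proposal is correct and follows essentially the same route as the paper: apply the projection formula along $j_W$, identify $j_W^*\LX^*$ with $\LXW^*$ via \eqref{eq:jW} and \eqref{eq:LXLWX}, and evaluate on the $3$-dimensional hermitian space $W/W^\perp$ using the Fermat-curve computation \eqref{eq:fermat}. No gaps.
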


\begin{proof}
  By the projection formula we know that $$c_1(\LX^*)\cdot [\ZW]=j_{W,*}(c_1(j_W^*\LX)).$$ By \eqref{eq:LXLWX}, we have (under the isomorphism \eqref{eq:ZWXW}) $$c_1(j_{W}^*\LX^*)=c_1(\LXW^*).$$ As $W/W^\perp$ is of dimension 3, we know the latter evaluates to $1-q^2$ in $\Ch^1(\XW_\kb)_\mathbb{Q}\simeq \mathbb{Q}$ by \eqref{eq:fermat}. The result then follows.
\end{proof}

\begin{proposition}\label{prop:c1}
  The following identity holds in $\Ch^1(\XVb)_\mathbb{Q}$:
  \begin{equation}\label{eq:LX}
    c_1(\LX^*)=\frac{1-q^2}{1+q^{2d+1}}\sum_{\codim W=1}[\ZW],
  \end{equation}
here the sum runs over all special subspaces $W\subseteq V$ of codimension 1.
\end{proposition}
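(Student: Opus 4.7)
The plan is to reduce the identity in $\Ch^1(\XVb)_\BQ$ to a family of intersection numbers via the Tate conjecture, and then to compute those numbers directly. Set $S:=\sum_{\codim W=1}[\ZW]$ and $\alpha:=(1-q^2)/(1+q^{2d+1})$. Since $\cl_1$ is injective on $\Ch^1(\XVb)_\BQ$, it is enough to show $\cl_1(c_1(\LX^*)-\alpha S)=0$ in $\Tate^2_\ell(\XVb)$. By the perfect pairing of Proposition \ref{prop:tate}(\ref{item:5}) and the spanning result for $\Tate^{2d-2}_\ell(\XVb)$ in Proposition \ref{prop:tate}(\ref{item:4}), this reduces to verifying
\[
(c_1(\LX^*)-\alpha S)\cdot[Z(W_0)]=0
\]
for every codimension $d-1$ special subspace $W_0\subseteq V$. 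Lemma \ref{lem:c1ZW} gives $c_1(\LX^*)\cdot[Z(W_0)]=1-q^2$, so the task becomes to prove $S\cdot[Z(W_0)]=1+q^{2d+1}$.

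To compute $[\ZW]\cdot[Z(W_0)]$ for each codimension $1$ special $W$, I split into two cases. When $W\supseteq W_0$, we have $Z(W_0)\subseteq Z(W)$, and the self-intersection formula combined with $\CN_{\ZW/\XV}\simeq\LW^*$ from \eqref{eq:linebundle} gives $[\ZW]\cdot[Z(W_0)]=\deg c_1(\LW^*|_{Z(W_0)})$; the identifications \eqref{eq:jW} and \eqref{eq:LXLWX} show that this restriction is the dual special line bundle on the Fermat curve $Z(W_0)\simeq\DL(W_0/W_0^\perp)$, whose degree is $1-q^2$ by Example \ref{exa:fermat}. When $W\not\supseteq W_0$, we have $W+W_0=V$ and $\dim_k(W\cap W_0)=d+1$, so $Z(W)\cap Z(W_0)=Z(W\cap W_0)$ is the reduced point $\{W\cap W_0\}$ if $W\cap W_0$ is special and is empty otherwise. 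A direct tangent space computation using $T_UZ(W)=\Hom(U/U^\perp,W/U)$ inside $T_U\XV=\Hom(U/U^\perp,V/U)$ shows the former intersection is transverse, so it contributes $1$.

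It remains to count. Parametrize codimension $1$ special $W$ by their (isotropic) radicals $\ell:=W^\perp$; then $W\supseteq W_0\iff\ell\subseteq W_0^\perp$, and a short check shows $W\cap W_0$ is special $\iff\ell\subseteq W_0$. Since $W_0^\perp$ is totally isotropic of $k$-dimension $d-1$, and $W_0/W_0^\perp$ is a $3$-dimensional hermitian space whose $q^3+1$ isotropic lines are the $k$-points of the Fermat curve of Example \ref{exa:fermat}, the standard lifting count gives
\[
N_1:=\#\{\ell\subseteq W_0^\perp\}=\tfrac{q^{2d-2}-1}{q^2-1},\qquad N_2:=\#\{\ell\subseteq W_0,\ \ell\not\subseteq W_0^\perp,\ \ell\text{ isotropic}\}=q^{2d-2}(q^3+1).
\]
Summing contributions from the two cases,
\[
S\cdot[Z(W_0)]=N_1(1-q^2)+N_2=-(q^{2d-2}-1)+q^{2d-2}(q^3+1)=1+q^{2d+1},
\]
as required.

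The main obstacle is the improper-intersection case $W\supseteq W_0$: one must invoke the self-intersection formula and carefully track how the normal bundle $\LW^*$ restricts to the smaller unitary Deligne--Lusztig curve $Z(W_0)$ in order to identify its first Chern class with that of the special line bundle on $\DL(W_0/W_0^\perp)$, where Example \ref{exa:fermat} supplies the key value $1-q^2$.
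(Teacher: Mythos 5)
Your proposal is correct and follows essentially the same route as the paper: reduction via the Tate conjecture and the perfect pairing to intersection numbers $[\ZW]\cdot[Z(W_0)]$ against $1$-dimensional special cycles, the excess/self-intersection formula with $\mathcal{N}_{\ZW/\XV}\simeq\LW^*$ in the containment case, transverse point contributions otherwise, and the same count of isotropic lines via $W\mapsto W^\perp$. The only (harmless) differences are cosmetic: you verify $S\cdot[Z(W_0)]=1+q^{2d+1}$ instead of the paper's weighted identity, you make the transversality explicit via tangent spaces where the paper cites $Z(W\cap W')\simeq\Spec k$, and your argument treats $d=1$ uniformly rather than as a separate base case.
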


\begin{proof}
  When $d=1$, the number of special subspaces $W\subseteq V$ is equal to the number of isotropic lines in the 3-dimensional $k/\k$-hermitian space $V$, which is $1+q^3$ (e.g., by \cite[Lemma 1.9.1]{LZ}). Hence the right hand side of \eqref{eq:LX} evaluates to $$\frac{1-q^2}{1+q^3}\cdot (1+q^3)=1-q^2$$ in $\Ch^1(\XVb)_\mathbb{Q}\simeq \mathbb{Q}$, which agrees with the left hand side by \eqref{eq:fermat}.

 When $d>1$,  by Proposition \ref{prop:tate} (\ref{item:4})~(\ref{item:5}) and the commutativity of \eqref{eq:chcl}, to prove \eqref{eq:LX} it suffices to show that for any 1-dimensional special cycle $\ZWp\subseteq \XV$ (associated to any codimension $d-1$ special subspace $W'\subseteq V$), the following identity holds in $\Ch^d(\XVb)_\mathbb{Q}\simeq \mathbb{Q}$,
  \begin{equation}
    \label{eq:complement}
    c_1(\LX^*)\cdot [\ZWp]=\frac{1-q^2}{1+q^{2d+1}}\sum_{\codim W=1}[\ZW]\cdot[\ZWp].
  \end{equation}

  For the terms of the right hand side of \eqref{eq:complement}, we have three cases.
  \begin{altenumerate}
  \item\label{item:a} When  $W'\subseteq W$, we have $\ZWp\subseteq\ZW$. By the excess intersection formula \cite[Corollary 6.3]{Ful98}, we know that $$[\ZW]\cdot[\ZWp]=j_{W,*}(c_1(\mathcal{N}_{\ZW/\XV})\cdot [\ZWp]_{\ZW}).$$
    By \eqref{eq:linebundle} and \eqref{eq:LXLWX}  we have (under the isomorphism \eqref{eq:ZWXW}) $$c_1(\mathcal{N}_{\ZW/\XV})\cdot [\ZWp]_{\ZW}=c_1(\LXW^*)\cdot [\ZWp]_{\XW},$$ which evaluates to $1-q^2$ in $\Ch^{d-1}(\XW_\kb)_\mathbb{Q}\simeq \mathbb{Q}$ by Lemma \ref{lem:c1ZW} applied to the 1-dimensional special cycle $\ZWp\subseteq \XW$. Thus in this case $$[\ZW]\cdot [\ZWp]=1-q^2.$$
  \item\label{item:b} When $W'\not\subseteq W$ and $W'\cap W\subseteq V$ is a special subspace, we know that $W'\cap W$ has codimension $d$ in $V$ and thus scheme-theoretic intersection $\ZW\cap\ZWp=Z(W'\cap W)$ is isomorphic to $\Spec k$ by Proposition \ref{prop:tate} (\ref{item:1}), and thus $$[\ZW]\cdot[\ZWp]=1.$$
  \item When $W'\not\subseteq W$ and $W'\cap W\subseteq V$ is not a special subspace, we know that $\ZW\cap\ZWp=Z(W'\cap W)$ is empty, and thus  $$[\ZW]\cdot[\ZWp]=0.$$
  \end{altenumerate}

Now we count the number of terms on the right hand side of \eqref{eq:complement} in first two cases.
\begin{altitemize}
\item The association $W\mapsto W^\perp$ gives a bijection between the set of codimension 1 special subspaces $W\subseteq V$
in Case (\ref{item:a})  and the set of isotropic $k$-lines in $W'^\perp$.  Hence the number of terms in Case (\ref{item:a}) is equal to $\frac{1-q^{2(d-1)}}{1-q^2}$, as $W'^\perp$ is a totally isotropic $k/\k$-hermitian space of dimension $d-1$.
\item The association $W\mapsto W^\perp$ gives a bijection between the set of codimension 1 special subspaces $W\subseteq V$ in Case (\ref{item:b}) and the set of isotropic $k$-lines in $W'\setminus W'^\perp$. Hence the number of terms in Case (\ref{item:b}) is equal to the number of vectors in $W'^\perp$ times the number of isotropic lines in the $k/\k$-hermitian space $W'/W'^\perp$. This is $q^{2(d-1)}(1+q^3)$, as $W'^\perp$ is of dimension $d-1$ and $W'/W'^\perp$ is of dimension 3.
\end{altitemize}
Thus the right hand side of \eqref{eq:complement} evaluates to $$\frac{1-q^2}{1+q^{2d+1}}\cdot\left( (1-q^2)\cdot \frac{1-q^{2(d-1)}}{1-q^2}+1\cdot q^{2(d-1)}(1+q^3)\right)=1-q^2,$$ which is equal to the left hand side by Lemma \ref{lem:c1ZW} applied to the 1-dimensional special cycle $\ZWp\subseteq\XV$.
\end{proof}

\begin{proposition}\label{prop:c1d}
  The following identity holds in $\Ch^d(\XVb)_\mathbb{Q}\simeq \mathbb{Q}$:
  \begin{equation*}\label{eq:c1d}
    c_1(\LX^*)^d=\prod_{i=1}^d(1-q^{2i}).
  \end{equation*}
\end{proposition}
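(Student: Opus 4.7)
I would prove the proposition by induction on $d$, using Proposition~\ref{prop:c1} to reduce the top self-intersection of $c_1(\LX^*)$ on $\XV$ to the analogous top self-intersection on each codimension $1$ special cycle $\ZW \simeq \XW$, which has dimension $d-1$ and whose associated hermitian space $W/W^\perp$ has dimension $2d-1$.

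The base case $d=1$ is immediate from \eqref{eq:fermat}. For the inductive step (with $d\ge 2$), Proposition~\ref{prop:c1} gives
\begin{equation*}
c_1(\LX^*)^d = \frac{1-q^2}{1+q^{2d+1}} \sum_{\codim W = 1} c_1(\LX^*)^{d-1} \cdot [\ZW].
\end{equation*}
For each codimension $1$ special subspace $W \subseteq V$, the projection formula together with \eqref{eq:LXLWX} yields
\begin{equation*}
c_1(\LX^*)^{d-1} \cdot [\ZW] = j_{W,*}\!\left(c_1(\LXW^*)^{d-1}\right),
\end{equation*}
whose degree in $\Ch^d(\XVb)_\mathbb{Q} \simeq \mathbb{Q}$ equals the degree of $c_1(\LXW^*)^{d-1}$ on the $(d-1)$-dimensional variety $\XW$. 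By the inductive hypothesis applied to the $(2d-1)$-dimensional hermitian space $W/W^\perp$, this degree equals $\prod_{i=1}^{d-1}(1-q^{2i})$, independently of the choice of $W$.

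The remaining ingredient is the count of codimension $1$ special subspaces, which under $W \mapsto W^\perp$ are in bijection with isotropic lines in $V$. A standard point count on the hermitian variety in $\mathbb{P}(V)\simeq \mathbb{P}^{2d}$ (cf.\ \cite[Lemma~1.9.1]{LZ}) gives
\begin{equation*}
\#\{W \subseteq V : \codim W = 1,\ W^\perp \subseteq W\} = \frac{(1+q^{2d+1})(1-q^{2d})}{1-q^2},
\end{equation*}
which specializes to the value $1+q^3$ for $d=1$ already used in the proof of Proposition~\ref{prop:c1}. Substituting back, the factors $1+q^{2d+1}$ and $1-q^2$ cancel, giving
\begin{equation*}
c_1(\LX^*)^d = (1-q^{2d}) \prod_{i=1}^{d-1}(1-q^{2i}) = \prod_{i=1}^d(1-q^{2i}),
\end{equation*}
as required. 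The main bookkeeping step is the isotropic-line count, but since this input is already implicit in the proof of Proposition~\ref{prop:c1}, I anticipate no genuine obstacle; the rest is a clean induction driven by the two key formulas \eqref{eq:LXLWX} and Proposition~\ref{prop:c1}.
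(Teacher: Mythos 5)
Your proposal is correct and follows essentially the same route as the paper's own proof: induction on $d$ with base case \eqref{eq:fermat}, reduction via Proposition~\ref{prop:c1}, the projection formula together with \eqref{eq:LXLWX} to invoke the inductive hypothesis on each $\ZW\simeq\XW$, and the isotropic-line count $\frac{(1+q^{2d+1})(1-q^{2d})}{1-q^2}$ from \cite[Lemma~1.9.1]{LZ} to conclude. No gaps to report.
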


\begin{proof}
  When $d=1$, this is \eqref{eq:fermat}. In general, we induct on $d$. By Proposition \ref{prop:c1}, we obtain
  \begin{equation}
    \label{eq:c1d}
    c_1(\LX^*)^d= \frac{1-q^2}{1+q^{2d+1}} \cdot c_1(\LX^*)^{d-1}\cdot\sum_{\codim W=1}[\ZW].
  \end{equation}
  By the projection formula and \eqref{eq:LXLWX}, we have (under the isomorphism \eqref{eq:ZWXW}), $$c_1(\LX^*)^{d-1}\cdot [\ZW]=j_{W,*}(c_1(\LXW^*)^{d-1}),$$  which evaluates to $\prod_{i=1}^{d-1}(1-q^{2i})$ by the induction hypothesis.

  The association $W\mapsto W^\perp$ gives a bijection the set of codimension 1 special subspaces $W\subseteq V$ and the set of isotropic $k$-lines in $V$. Thus the total number of terms in \eqref{eq:c1d} is the number of isotropic $k$-lines in the $(2d+1)$-dimensional $k/\k$-hermitian space $V$, which is $\frac{(1+q^{2d+1})(1-q^{2d})}{1-q^2}$ (e.g., by \cite[Lemma 1.9.1]{LZ}).  Thus \eqref{eq:c1d} evaluates to $$\frac{1-q^2}{1+q^{2d+1}}\cdot \prod_{i=1}^{d-1}(1-q^{2i})\cdot \frac{(1+q^{2d+1})(1-q^{2d})}{1-q^2}=\prod_{i=1}^{d}(1-q^{2i}).$$ This completes the proof.
\end{proof}

\subsection{Proof of Theorem \ref{thm:mainA}} \label{sec:proof-theor-refthm:m} Recall $\pl: \GrV\rightarrow \PV$ is the Pl\"ucker embedding.  By definition
\begin{equation}
  \label{eq:degdef}
  \deg\XV=[\XV]_{\PV}\cdot c_1(\mathcal{O}_{\PV}(1))^d
\end{equation}
is the intersection number of $\XV$ with $d$ general hyperplanes in $\PV$. By the projection formula, we obtain that $$\deg\XV=c_1((\pl\circ i)^*\mathcal{O}_{\PV}(1))^d$$ in $\Ch^d(\XV_\kb)_\mathbb{Q}\simeq \mathbb{Q}$. By the definition of the Pl\"ucker embedding, we have
\begin{equation*}
  \label{eq:pluckerbundle}
  \pl^*\mathcal{O}_{\PV}(1)\simeq\det \SGr^*.
\end{equation*}
 Thus $$c_1((\pl\circ i)^*\mathcal{O}_{\PV}(1))=i^*c_1(\det\SGr^*)=i^*c_1(\SGr^*).$$ By Lemma \ref{lem:bunGr} (\ref{item:b4}), we have $$i^*c_1(\SGr^*)=\frac{c_1(\LX^*)}{1-q},$$ and hence $$\deg\XV=\frac{c_1(\LX^*)^d}{(1-q)^d}.$$ The result then follows from Proposition \ref{prop:c1d}.

\section{Schubert calculus}

\subsection{Reminder on Schubert calculus (cf. \cite[Chapter 4]{EH16},\cite[\S14.7]{Ful98})} \label{sec:remind-schub-calc}
Let $0\le m\le n$. The Schubert classes of $\Gr_m(V)$ are indexed by $m$-tuples $a=(a_1,\ldots,a_m)$ of integers satisfying $$n-m\ge a_1\ge a_2\ge \cdots \ge a_m\ge0,$$ in other words, indexed by Young diagrams inside the $m\times (n-m)$ rectangles. For such an $m$-tuple $a$, define a \emph{Schubert cycle} $$\Sigma_a(V_\bullet):=\{U\in \Gr_m(V): \dim U\cap V_{n-m+i-a_i}\ge i,\ i=1,\ldots, m\}\subseteq \Gr_m(V),$$ where $$V_\bullet: 0\subset V_1\subset V_2\subset\cdots\subset V_{n-1}\subset V_{n}:=V$$ is a complete flag in $V$. The Schubert cycle $\Sigma_a(V_\bullet)\subseteq\Gr_m(V)$ is a closed subvariety of codimension $|a|:=\sum_{i=1}^m a_i$. Define the \emph{Schubert class} $$\sigma_a:=[\Sigma_a(V_\bullet)]\in \Ch^{|a|}(\Gr_m(V)_\kb),$$ which is independent of the choice of the complete flag $V_\bullet$. We use the standard notation suppressing trailing zeros in the indices. In particular, by definition $\sigma_{1}=\sigma_{1,0,\ldots,0}\in\Ch^1(\Gr_m(V)_\kb)$ is the hyperplane class under the Pl\"ucker embedding.

Define the \emph{dual} $a^*:=(n-m-a_m,\cdots, n-m-a_1)$. The Schubert classes form a $\mathbb{Q}$-basis of $\Ch^*(\Gr_m(V)_{\kb})_{\mathbb{Q}}$ and the intersection pairing $$\Ch^r(\Gr_m(V)_{\kb})_\mathbb{Q}\times \Ch^{m(n-m)-r}(\Gr_m(V)_{\kb})_\mathbb{Q}\rightarrow \mathbb{Q}$$ is perfect and has Schubert classes as dual basis, with $\sigma_a$ and $\sigma_b$ dual to each other if and only if $b=a^*$.

Define the \emph{conjugate} $a':=(a'_1,\ldots,a'_{n-m})$ such that $a'_j$ is the number of $i$'s such that $a_i\ge j$
. The Young diagrams of $a$ and $a'$ are mutual reflections along the main diagonal. The canonical isomorphism
\begin{equation}
  \label{eq:dualGr}
  \Gr_m(V)\simeq \Gr_{n-m}(V^*),\quad U\mapsto (V/U)^*
\end{equation}
 maps $\sigma_a\in \Ch^{|a|}(\Gr_m(V))$ to $\sigma_{a'}\in \Ch^{|a|}(\Gr_{n-m}(V^*))$.

\subsection{The class of $\XV$}

Let $\Grdd$ be the variety parametrizing partial flags $$U_\bullet: 0\subseteq U_d\subseteq U_{d+1}\subseteq V,$$ where $\dim U_d=d$ and $\dim U_{d+1}=d+1$. Define a closed embedding $$\psi: \Grdd\rightarrow \Gr_{d}(V)\times \Gr_{d+1}(V), \quad U_\bullet\mapsto (U_d, U_{d+1}).$$ Also consider the closed embedding $$(\phi,\id):\GrV\rightarrow \Grd\times \GrV,\quad U\mapsto (U^\perp, U).$$
Then by definition $(\phi,\id)$ induces an isomorphism between $\XV$ and $\im(\phi,\id)\cap \im(\psi)$.

\begin{proposition}\label{prop:imphi}
    The following identity holds in $\Ch^*(\Grd_\kb\times\GrV_\kb)_\mathbb{Q}$: $$[\im\psi]=\sum_{a,b} \sigma_{a^*}\times \sigma_{b^*},$$ where the sum runs over
  \begin{altitemize}
  \item $a=(a_1,\ldots,a_d)$ satisfying $d+1\ge a_1\ge\cdots \ge a_d\ge0$,
  \item $b=(b_1,\ldots, b_{d+1})$ satisfying $b_1=d$, $b_{i}=d+1-a_{d+2-i}\ge0$ for $i=2,\ldots,d+1$.
  \end{altitemize}

\end{proposition}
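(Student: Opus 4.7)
The plan is to exhibit $\im(\psi)$ as the zero scheme of a natural bundle map on $\Gr_d(V) \times \Gr_{d+1}(V)$, then expand the resulting top Chern class via a Cauchy-type Schur identity, and finally match the bookkeeping with the author's indexing. Let $\mathcal{S}_d$ and $\mathcal{Q}_{d+1}$ denote the pullbacks to the product of the universal subbundle on $\Gr_d(V)$ and the universal quotient bundle on $\Gr_{d+1}(V)$; both have rank $d$ (since $n = 2d+1$). A point $(U_d, U_{d+1})$ lies in $\im(\psi)$ precisely when the composition $U_d \hookrightarrow V \twoheadrightarrow V/U_{d+1}$ vanishes, so globally $\im(\psi)$ is the zero scheme of the natural section of $\mathcal{S}_d^\vee \boxtimes \mathcal{Q}_{d+1}$, a bundle of rank $d^2$. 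Since this rank matches the codimension of $\im(\psi)$, a routine local regularity check (e.g., in Pl\"ucker charts) yields
\[
[\im(\psi)] = c_{d^2}(\mathcal{S}_d^\vee \boxtimes \mathcal{Q}_{d+1}) \in \Ch^{d^2}(\Gr_d(V)_\kb \times \Gr_{d+1}(V)_\kb)_\mathbb{Q}.
\]

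Next, the splitting principle rewrites this as $\prod_{i,j=1}^d (x_i + y_j)$, where $x_1, \ldots, x_d$ and $y_1, \ldots, y_d$ are the Chern roots of $\mathcal{S}_d^\vee$ and $\mathcal{Q}_{d+1}$ respectively. This admits the expansion
\[
\prod_{i,j=1}^d (x_i + y_j) = \sum_{\lambda \subseteq d \times d} s_\lambda(x) \cdot s_{\hat\lambda'}(y),
\]
where $\hat\lambda$ is the $180^\circ$-rotated complement of $\lambda$ in the $d \times d$ square (so $\hat\lambda_i = d - \lambda_{d+1-i}$) and $\hat\lambda'$ denotes its conjugate. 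This is a standard Cauchy-type identity that can be derived from the dual Cauchy identity by specialization, or verified directly by induction on $d$. Applying the Giambelli translations $s_\lambda(\mathcal{S}_d^\vee) = \sigma_\lambda$ on $\Gr_d(V)$ and $s_\mu(\mathcal{Q}_{d+1}) = \sigma_{\mu'}$ on $\Gr_{d+1}(V)$, the expansion becomes
\[
[\im(\psi)] = \sum_{\lambda \subseteq d \times d} \sigma_\lambda \times \sigma_{\hat\lambda}.
\]

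Finally, matching with the statement is direct bookkeeping: taking $a^* = \lambda$ dualized in the $d \times (d+1)$ rectangle for $\Gr_d(V)$ yields $a_k = (d+1) - \lambda_{d+1-k}$ (so $d+1 \ge a_1 \ge \cdots \ge a_d \ge 1$), and taking $b^* = \hat\lambda$ dualized in the $(d+1) \times d$ rectangle for $\Gr_{d+1}(V)$ gives $b_1 = d$ and $b_i = d+1 - a_{d+2-i}$ for $i \ge 2$, exactly as in the statement. The author's stated range $a_d \ge 0$ tacitly includes terms with $a_d = 0$, which force $b_2 = d+1$ outside the valid Schubert range on $\Gr_{d+1}(V)$ and hence contribute zero. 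The main obstacle I anticipate is precisely this bookkeeping: the two Schur-to-Schubert dictionaries $s_\lambda(\mathcal{S}^\vee) = \sigma_\lambda$ and $s_\lambda(\mathcal{Q}) = \sigma_{\lambda'}$ differ by a conjugation, and the dualization $(\cdot)^*$ depends on the ambient rectangle. I would therefore first verify the correspondence at $d = 1$, where the identity reduces to the classical $[\im(\psi)] = h + k$ on $\mathbb{P}^2 \times (\mathbb{P}^2)^\vee$, and at $d = 2$ as sanity checks before writing out the general index matching.
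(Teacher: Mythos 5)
Your proposal is correct, but it takes a genuinely different route from the paper's. The paper argues by duality: choosing two transverse complete flags, it shows that $\im(\psi)$ meets $\Sigma_a(V_\bullet)\times\Sigma_b(W_\bullet)$ in a single transverse point exactly when $b_1=d$ and $b_i=d+1-a_{d+2-i}$ for $i\ge 2$, and in the empty set otherwise; Poincar\'e duality of Schubert classes then yields the stated expansion without any Chern class computation. You instead realize $\im(\psi)$ as the zero locus of the canonical section of $\mathcal{H}om(\mathcal{S}_d,\mathcal{Q}_{d+1})$, whose rank $d^2$ equals the codimension, so that $[\im\psi]=c_{d^2}(\mathcal{S}_d^\vee\boxtimes\mathcal{Q}_{d+1})$; expanding via the dual Cauchy identity and the dictionaries $s_\lambda(\mathcal{S}^\vee)=\sigma_\lambda$, $s_\mu(\mathcal{Q})=\sigma_{\mu'}$ (which are the correct ones in the paper's Eisenbud--Harris indexing) gives $\sum_{\lambda\subseteq d\times d}\sigma_\lambda\times\sigma_{\hat\lambda}$, and your index matching $a^*=\lambda$, $b^*=\hat\lambda$ (padded by a zero) reproduces the proposition, with your observation that the paper's $a_d=0$ terms are vacuous being consistent with how the paper itself uses the formula in Corollary \ref{cor:fundclass}. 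What your route buys is a structural derivation that already anticipates the dual Cauchy identity the paper only invokes later, in the proof of Corollary \ref{cor:main}; what the paper's route buys is elementarity, and in particular it never needs the one step you defer to a ``routine local regularity check,'' namely that the zero scheme of the section is reduced and equal to $\im(\psi)$. That step does go through and should be recorded: at a point $(U_d,U_{d+1})$ with $U_d\subseteq U_{d+1}$ the derivative of the section surjects onto $\Hom(U_d,V/U_{d+1})$, because restriction $\Hom(U_{d+1},V/U_{d+1})\to\Hom(U_d,V/U_{d+1})$ is already surjective, so the zero scheme is smooth of the expected dimension and the top Chern class formula applies; with that sentence added, your argument is complete.
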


\begin{proof}
    Let $V_\bullet$ and $W_\bullet$ be two transverse complete flags in $V$ (\cite[Definition 4.4]{EH16}). Recall that $V_\bullet$ and $W_\bullet$ are transverse means that $V_i\cap W_{n-i}=0$ for any $i$. Let $$ V^{(i)}:=V_{n-d+i-a_i}, \quad W^{(i)}:=W_{n-(d+1)+i-b_i}.$$ Then by definition $$\Sigma_a(V_\bullet)=\{U_d\in\Grd: \dim U_d\cap V^{(i)}\ge i, \quad i=1,\ldots,d\}$$ and $$\Sigma_b(W_\bullet)=\{U_{d+1}\in\GrV: \dim U_{d+1}\cap W^{(i)}\ge i, \quad i=1,\ldots,d+1\}.$$ By the transversality it is easy to see that the intersection of $\im(\psi)$ with $\Sigma_a(V_\bullet)\times \Sigma_b(W_\bullet)\subseteq \Grd\times\GrV$ is nonempty if and only if $$\dim W^{(1)}=1, \quad \dim W^{(i)} \cap V^{(d+2-i)} =1, \quad i=2,\ldots d+1,$$ in which case the intersection is transverse at the unique point given by $$U_d=\bigoplus_{i=2}^{d+1}W^{(i)} \cap V^{(d+2-i)},\quad U_{d+1}= U_d \oplus W^{(1)}.$$ Therefore $$[\im \psi]\cdot (\sigma_a\times \sigma_b)=
  \begin{cases}
    1, & b_1=d,\quad b_i=d+1-a_{d+2-i}, i=2,\ldots, d+1,\\
    0, & \text{otherwise}.
  \end{cases}
  $$ The desired result then follows from the fact that Schubert classes form dual basis under the intersection pairing.
\end{proof}

\begin{corollary}\label{cor:fundclass}
      The following identity holds in $\Ch^*(\GrV_\kb)_\mathbb{Q}$: $$[\XV]=\sum_c \sigma_c\sigma_{\widehat c'} q^{|c|},$$ where the sum runs over $c=(c_1,\ldots,c_d)$ such that $d\ge c_1\ge \cdots \ge c_d\ge0$, and $\widehat c$ is the \emph{complement} of $c$ defined by $\hat c:=(d-c_d,\ldots, d-c_1)$.
\end{corollary}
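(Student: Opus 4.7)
The plan is to realize $\XV$ as an intersection in $\Grd\times\GrV$ and combine Proposition \ref{prop:imphi} with a Frobenius-twisted Schubert pullback formula for the perp map $\phi:\GrV\to\Grd$, $U\mapsto U^\perp$. From the definitions one checks $\XV=(\phi,\id)^{-1}(\im\psi)$ scheme-theoretically, and since $\XV$ is smooth of the expected codimension $d^2$ in $\GrV$ by Proposition \ref{prop:tate}(\ref{item:1}), the refined Gysin pullback along the closed embedding $(\phi,\id):\GrV\hookrightarrow\Grd\times\GrV$ gives $[\XV]=(\phi,\id)^*[\im\psi]$ in $\Ch^{d^2}(\GrV_{\kb})_{\mathbb{Q}}$. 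Substituting the expression from Proposition \ref{prop:imphi} and using that $(\phi,\id)^*$ acts on K\"unneth components as $\phi^*$ on the $\Grd$-factor and as the identity on the $\GrV$-factor, we obtain $[\XV]=\sum_{a,b}\phi^*\sigma_{a^*}\cdot\sigma_{b^*}$.

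The key technical step is the Schubert pullback formula
\[\phi^*\sigma_c = q^{|c|}\sigma_{c'}\qquad\text{in }\Ch^{|c|}(\GrV_{\kb})_{\mathbb{Q}},\]
for every partition $c$ fitting in the $d\times d$ box. The perp construction identifies $\phi^*$ of the universal subbundle $\mathcal{S}_{\Grd}$ with $\SGr^\perp$ on $\GrV$, and the same argument as in Lemma \ref{lem:bunGr}(\ref{item:b2}) yields the global isomorphism $\SGr^\perp\cong(\sigma^*\QGr)^*$. Combining this with Whitney's formula and the standard Grassmannian identity that the $k$-th complete symmetric function in the Chern roots of $\QGr$ equals $c_k(\SGr^*)$, one deduces $\phi^*c_k(\mathcal{Q}_{\Grd})=q^k c_k(\SGr^*)$. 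Substituting this into the Giambelli formula $\sigma_c=\det(c_{c_i-i+j}(\mathcal{Q}_{\Grd}))$ on $\Grd$, factoring $q^{|c|}$ out of rows and columns of the resulting determinant, and recognizing the answer as $\sigma_{c'}$ via the dual Giambelli formula $\sigma_{c'}=\det(c_{c_i-i+j}(\SGr^*))$ on $\GrV$ (noting $(c')'=c$), proves the pullback formula.

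It remains to reparametrize the double sum. Set $c:=a^*$: this identifies the effective $a$-index of Proposition \ref{prop:imphi} (those with $a_d\ge1$, the only ones giving nonzero $\sigma_{b^*}$) with partitions $c$ in the $d\times d$ box, and $\sigma_{a^*}=\sigma_c$. A direct computation from $b_1=d$, $b_i=d+1-a_{d+2-i}$ yields $b^*=(\widehat c,0)$, so $\sigma_{b^*}=\sigma_{\widehat c}$ on $\GrV$. Substituting gives $[\XV]=\sum_c q^{|c|}\sigma_{c'}\sigma_{\widehat c}$, the sum over $c$ in the $d\times d$ box. Reindexing via the involution $c\mapsto c'$ on this set, using $|c'|=|c|$ and the identity $\widehat{(c')}=(\widehat c)'=\widehat c'$ (complement and conjugate commute in the $d\times d$ box, easily verified directly), transforms the sum into the desired $\sum_c q^{|c|}\sigma_c\sigma_{\widehat c'}$. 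The main obstacle is the pullback formula $\phi^*\sigma_c=q^{|c|}\sigma_{c'}$; the appearance of the conjugate $c'$ reflects the forced switch from quotient- to subbundle-Chern classes (coming from $\phi^*\mathcal{S}_{\Grd}\cong(\sigma^*\QGr)^*$), and the factor $q^{|c|}$ comes from the Frobenius twist.
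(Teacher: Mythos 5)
Your proposal is correct and follows essentially the same route as the paper: $[\XV]=(\phi,\id)^*[\im\psi]$ combined with Proposition \ref{prop:imphi} and the key pullback identity $\phi^*\sigma_c=q^{|c|}\sigma_{c'}$, which the paper deduces directly from $U^\perp\simeq\sigma^*(V/U)^*$ and \eqref{eq:dualGr} and you instead verify in more detail via $\phi^*c_k(\mathcal{Q}_{\Grd})=q^kc_k(\SGr^*)$ and the (dual) Giambelli determinants. Your final reindexing (setting $c=a^*$ and then applying the involution $c\mapsto c'$) is equivalent to the paper's choice $c=(a_1-1,\ldots,a_d-1)$ followed by the involution $c\mapsto \widehat c'$.
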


\begin{proof}
  By definition we have $[\XV]=(\phi,\id)^*[\im(\psi)]$. Since $U^\perp\simeq \sigma^*(V/U)^*$, by (\ref{eq:dualGr}) it is easy to see that $$(\phi,\id)^*(\sigma_{a^*}\times\sigma_{b^*})= q^{|a^*|} \sigma_{(a^*)'}\sigma_{b^*} \in \Ch^*(\GrV_\kb)_\mathbb{Q}.$$
  Since $b=(d, d+1-a_d, d+1-a_2,\ldots, d+1-a_1)$, we know that $b^*=(a_1-1,\ldots,a_d-1,0)$. Let $c=(a_1-1,\ldots,a_d-1)$. Then $\widehat c=(d+1-a_d,\ldots, d+1-a_1)=a^*$.   It follows from Proposition \ref{prop:imphi} that $$[\XV]=\sum_{c} \sigma_{\widehat c'}\sigma_{c}q^{|\widehat c|}=\sum_{c} \sigma_{c}\sigma_{\widehat c'}q^{|c|}.$$ This completes the proof.\qedhere

\end{proof}

\begin{example}
  \begin{altitemize}
  \item    When $d=1$, by Corollary \ref{cor:fundclass} we obtain $$[\XV]=\sigma_1+\sigma_1q=(1+q)\sigma_1.$$ This agrees with the fact that $\XV\subseteq \GrV\simeq \mathbb{P}^2$ is the Fermat curve of degree $1+q$ (Example~\ref{exa:fermat}).
  \item When $d=2$, by Corollary \ref{cor:fundclass} we obtain $$[\XV]=\sigma_{2,2}+\sigma_1\sigma_{2,1}q+\sigma_2\sigma_{1,1}q^2+\sigma_{1,1}\sigma_{2}q^2+\sigma_{2,1}\sigma_1q^3+\sigma_{2,2}q^4.$$ 
  \end{altitemize}
\end{example}

\subsection{Proof of Theorem \ref{thm:mainB}}\label{sec:proof-theor-refthm:m-1} By (\ref{eq:degdef}) and the projection formula, we have $$\deg\XV=[\XV]\cdot c_1(\pl^*\mathcal{O}_{\PV}(1))^d.$$ The result then follows from Corollary \ref{cor:fundclass} and $\sigma_1=c_1(\pl^*\mathcal{O}_{\PV}(1))$ is the hyperplane place class under the Pl\"ucker embedding.

\subsection{Proof of Corollary \ref{cor:main}}\label{sec:proof-coroll-refc}
\begin{altenumerate}
\item This follows immediately from Theorems \ref{thm:mainA} and \ref{thm:mainB}.
\item Let $$S_c(x_1,\ldots,x_d)=\det(x_i^{c_j+d-j})/\det(x_i^{d-j})$$ ($1\le i,j\le d$) be the \emph{Schur polynomial} associated to $c=(c_1,\ldots,c_d)$ (\cite{Macdonald1992}). It is a symmetric polynomial of degree $|c|$. Write $$S:=\sum_c S_c(qx_1,\ldots, qx_d) S_{\widehat c'}(x_1,\ldots, x_d) S_1^d(x_1,\ldots, x_d)=:\sum_\lambda \kappa_{\lambda}S_\lambda,$$ where $\lambda$ runs over $\lambda=(\lambda_1,\ldots,\lambda_d)$ with $|\lambda|=d(d+1)$. By definition, the coefficient $\kappa_\lambda$ is given by the coefficient of $x_1^{\lambda_1+d-1}x_2^{\lambda_2+d-2}\cdots x_d^{\lambda_d}$ in
  \begin{equation}
    \label{eq:Schur}
    S\cdot \det(x_i^{d-j})=S\cdot \prod_{i<j}(x_i-x_j).
  \end{equation}
 Since the class of a point is $\sigma_{(d+1,\ldots,d+1)}$, we know that $$\sum_{c}\sigma_{c}\sigma_{\widehat c'}\sigma_1^dq^{|c|}=\kappa_{(d+1,d+1,\ldots, d+1)},$$ i.e., the coefficient of $x_1^{2d}x_2^{2d-1}\cdots x_d^{d+1}$ in (\ref{eq:Schur}). It remains to compute (\ref{eq:Schur}). By the dual Cauchy identity for Schur polynomials (\cite[0.11']{Macdonald1992}), we have $$\sum_cS_c(x_1,\ldots, x_d) S_{\widehat c'}(y_1,\ldots, y_d )=\prod_{i,j=1}^d(x_i+y_j).$$ By definition, we have $$S_1(x_1,\ldots, x_d)=x_1+\cdots+x_d.$$ Therefore $$S=\sum_c S_c(qx_1,\ldots, qx_d) S_{\widehat c'}(x_1,\ldots, x_d) S_1^d(x_1,\ldots, x_d)=\left(\prod_{i,j=1}^d(qx_i+x_j)\right)(x_1+\cdots+x_d)^d,$$ and thus
\begin{equation}
  \label{eq:coeffpoly}
  S\cdot \prod_{i<j}(x_i-x_j)=\left(\prod_{i,j=1}^d(qx_i+x_j)\right)(x_1+\cdots+x_d)^d\left(\prod_{i<j}(x_i-x_j)\right). 
\end{equation} The result then follows from Item (\ref{item:m0}).
\item    By applying Pieri's formula (\cite[Proposition 4.9]{EH16}) $d$ times, we know that the term $\sigma_c\sigma_{\widehat c'}\sigma_1^d$ is equal to the number of sequences of Young diagrams $c^{(0)},c^{(1)}\ldots, c^{(d)}$ starting with $c^{(0)}=c$ and ending with $c^{(d)}=(\widehat c')^*$
such that each $c^{(i+1)}$ has exactly one more box than $c^{(i)}$. Equivalently, it is the number of standard Young tableaux of skew shape $(\widehat c')^*/c$. Now Item (\ref{item:m0}) shows that the number of such standard Young tableaux with $|c|=\Ell$ is equal to the coefficient of $q^\Ell$ in $$\prod_{i=1}^d{\frac{1-q^{2i}}{1-q}}=\prod_{i=1}^d\sum_{j=0}^{2i-1}q^j,$$ which is equal to the number of ordered partitions $(\Ell_1,\ldots, \Ell_d)$ of $\Ell$ satisfying the extra conditions $$0\le \Ell_i\le 2i-1, \quad i=1,\ldots,d.$$
\end{altenumerate}

\begin{example}\label{exa:smalld} We end with an example illustrating Corollary \ref{cor:main} (\ref{item:m2}) (\ref{item:m1}).
  \begin{altitemize}
  \item  When $d=1$, the polynomial (\ref{eq:coeffpoly}) is equal to $(1+q) x_1^2$. The coefficient of $x_1^2$  is given by $1+q=\frac{1-q^2}{1-q}$ as in Corollary \ref{cor:main} (\ref{item:m2}).
  \item  When $d=2$, the polynomial (\ref{eq:coeffpoly}) is equal to
   \begin{align*}
     \left(q+2 q^2+q^3\right) x_1^6 x_2+\left(1+3 q+4 q^2+3 q^3+q^4\right) x_1^5 x_2^2+\left(1+2 q+2 q^2+2 q^3+q^4\right) x_1^4 x_2^3\\+\left(-1-2 q-2 q^2-2 q^3-q^4\right) x_1^3 x_2^4+\left(-1-3 q-4 q^2-3 q^3-q^4\right) x_1^2 x_2^5+\left(-q-2 q^2-q^3\right) x_1 x_2^6
   \end{align*} The coefficient of $x_1^4x_2^3$ is given by $$1+2 q+2 q^2+2 q^3+q^4=(1+q)(1+q+q^2+q^3),$$ which equals $\frac{1-q^2}{1-q}\cdot\frac{1-q^4}{1-q}$ as in Corollary \ref{cor:main} (\ref{item:m2}).
 \item When $d=3$, the coefficient of $x_1^6x_2^5x_3^4$ is equal to $$1 + 3 q + 5 q^2 + 7 q^3 + 8 q^4 + 8 q^5 + 7 q^6 + 5 q^7 + 3 q^8 + q^9= (1+q)(1+q^2+q^3)(1+q+q^2+q^3+q^4+q^5),$$ as in Corollary \ref{cor:main} (\ref{item:m2}). In Table \ref{tab:skew} we list all standard Young tableaux of skew shape $(\widehat c')^*/c$ with $|c|=4$ as in Corollary \ref{cor:main} (\ref{item:m1}). Notice the total number of such Young tableaux  is 8, which indeed agrees with the coefficient of $q^4$.

   \begin{table*}[h]
     \centering
     \begin{tabular}[h]{|c|c|c|}
       $c$ & $(\widehat c')^*$ & $(\widehat c')^*/c$\\[1em]
       $\yng(3,1)$ &  $\yng(3,2,1,1)$ & $\young(:1,2,3)$\qquad $\young(:2,1,3)$\qquad $\young(:3,1,2)$\\[3em]
       $\yng(2,2)$ & $\yng(3,2,2)$ & \begin{ytableau}
         \none & \none  & 1\\
         \none & \none  & \none\\
         2 & 3
\end{ytableau}\qquad \begin{ytableau}
         \none & \none  & 2\\
         \none & \none  & \none\\
         1 & 3
\end{ytableau}\qquad \begin{ytableau}
         \none & \none  & 3\\
         \none & \none  & \none\\
         1 & 2
\end{ytableau} 

\\[3em]
       $\yng(2,1,1)$ & $\yng(3,3,1)$ & $\young(:1,23)$\qquad $\young(:2,13)$\\[2em]
     \end{tabular}
\caption{standard Young tableaux of skew shape $(\widehat c')^*/c$}\label{tab:skew}
   \end{table*}
   \end{altitemize}
\end{example}

\bibliographystyle{alpha}
\bibliography{KR}

\end{document}